\documentclass[letterpaper, DIV=10]{scrartcl}

\usepackage{amssymb, amsthm, mathtools, cite}
\usepackage[hidelinks, citecolor=blue]{hyperref}
\hypersetup{colorlinks=true, urlcolor = blue, linkcolor=black}

\newtheorem{theorem}{Theorem}
\newtheorem{corollary}[theorem]{Corollary}
\newtheorem{lemma}[theorem]{Lemma}
\newtheorem{proposition}[theorem]{Proposition}
\numberwithin{equation}{section}
\numberwithin{theorem}{section}

\newcommand{\rr}{{\mathbb{R}}}

\newcommand{\zz}{{\mathbb{Z}}}
\newcommand{\nn}{{\mathbb{N}_0}}
\newcommand{\cp}{{\mathbb{C}_+}}
\newcommand{\ee}{{\mathbb{E}\,}}
\newcommand{\eesub}[1]{{\mathbb{E}_{#1}\,}}
\newcommand{\pp}{{\mathbb{P}}}
\newcommand{\oh}{{\mathcal{O}}}
\newcommand{\im}{{\operatorname{Im}\,}}
\newcommand{\re}{{\operatorname{Re }\,}}
\newcommand{\parder}[2]{{\frac{\partial #1}{\partial #2}}}
\newcommand{\tr}{{\operatorname{Tr}\,}}
\newcommand{\sgn}{{\operatorname{sgn}\,}}

\newcommand{\ketbra}[2]{{{|#1\rangle\langle #2|}}}
\newcommand{\beq}[1]{\begin{equation} \label{#1}}
\newcommand{\eeq}{\end{equation}}
\newcommand{\zel}{{z_\ell}}

\begin{document}
\addtokomafont{author}{\raggedright}
\title{\raggedright The Phase Transition in the Ultrametric Ensemble and Local Stability of Dyson Brownian Motion}
\author{\hspace{-.075in}Per von Soosten and Simone Warzel}
\date{\vspace{-.2in}}
\maketitle
\minisec{Abstract}
We study the ultrametric random matrix ensemble, whose independent entries have variances decaying exponentially in the metric induced by the tree topology on $\mathbb{N}$, and map out the entire localization regime in terms of eigenfunction localization and Poisson statistics. Our results complement existing works on complete delocalization and random matrix universality, thereby proving the existence of a phase transition in this model. In the simpler case of the Rosenzweig-Porter model, the analysis yields a complete characterization of the transition in the local statistics. The proofs are based on the flow of the resolvents of matrices with a random diagonal component under Dyson Brownian motion, for which we establish submicroscopic stability results for short times. These results go beyond norm-based continuity arguments for Dyson Brownian motion and complement the existing analysis after the local equilibration time.
\bigskip

\section{Introduction}
One-dimensional lattice Hamiltonians with random long-range hopping provide a useful and simplified testing ground for the Anderson metal-insulator transition in more complicated systems. Two prominent examples of such models are the random band matrices \cite{PhysRevLett.64.1851,PhysRevLett.67.2405}, whose entries $H_{ij}$ are zero outside some band $|i-j| \le W$, and the power-law random band matrices (PRBM) \cite{RevModPhys.80.1355,PhysRevE.54.3221}, whose entries $H_{ij}$ have variances decaying according to some power of the Euclidean distance $|i-j|$. Even for these models, the mathematically rigorous understanding is far from complete, although there has been some progress; see \cite{MR2726110,0036-0279-66-3-R02, MR3695802,MR1942858,MR3085669, MR2525652} and references therein. This article is concerned with a further simplification, the ultrametric ensemble of Fyodorov, Ossipov and Rodriguez \cite{1742-5468-2009-12-L12001}, which is essentially obtained by replacing the Euclidean distance in the definition of the PRBM with the metric induced by the tree topology.

The index space of the ultrametric ensemble is $B_n = \{1,2,\dots, 2^n\}$ endowed with the metric
\[d(x,y) = \min  \left\{ r \geq 0 \, | \, \mbox{$x$ and $y$ lie in a common member of $\mathcal{P}_r$} \right\},\]
where $\{\mathcal{P}_r\}$ is the nested sequence of partitions defined by
\[B_n = \{1, \dots, 2^{r}\} \cup \{2^r + 1, \dots, 2\cdot 2^r \} \cup \dots \cup \{2^{n-r-1}2^r + 1, \dots, 2^n\}.\]
The basic building blocks of the ultrametric ensemble are the matrices $\Phi_{n,r}: \ell^2(B_n) \to \ell^2(B_n)$ whose entries are independent (up to the symmetry constraint) centered real Gaussian random variables with variance
\beq{eq:Phidef} \ee \left| \langle \delta_y, \Phi_{n,r} \delta_x \rangle \right|^2 = 2^{-r} \begin{cases} 2 & \mbox{ if } d(x,y) = 0\\ 1 & \mbox{ if } 1 \le d(x,y) \le r\\ 0 & \mbox{ otherwise. }\end{cases}
\eeq
Here and throughout this paper, $\delta_x \in \ell^2$ denotes the canonical basis element defined by
\[\delta_x(u) = \begin{cases} 1 & \mbox{ if }  u = x\\ 0 & \mbox{ if } u \neq x\end{cases}\]
and $\delta_{xy} = \langle \delta_y, \delta_x \rangle$. Thus $\Phi_{n,r}$ is a direct sum of $2^{n-r}$ random matrices drawn independently from the Gaussian Orthogonal Ensemble (GOE) of size $2^r$. The ultrametric ensemble with parameter $c \in \rr$ refers to the random matrix
\beq{eq:Hdef}H_n = \frac{1}{Z_{n,c}} \sum_{r =0}^n 2^{-\frac{(1+c)}{2}r} \Phi_{n,r}
\eeq
where $\Phi_{n,r}$ and $\Phi_{n,s}$ are independent for $r \neq s$. We choose the normalizing constant $Z_{n,c}$ such that
\[\sum_{y \in B_n} \ee \left| \langle \delta_y, H_n \delta_x \rangle \right|^2 = 1,\]
making the variance matrix $\Sigma_n$ of $H_n$ doubly stochastic. The original definition in \cite{1742-5468-2009-12-L12001} contains an additional parameter governing the relative strengths of the diagonal and off-diagonal disorder, but this parameter does not significantly alter our analysis and so we omit it altogether. Moreover, the authors of \cite{1742-5468-2009-12-L12001} constructed the block matrices $\Phi_{n,r}$ from the Gaussian Unitary Ensemble (GUE) and our results apply to both GOE and GUE blocks with only slight changes.

The ultrametric ensemble is a hierarchical analogue of the PRBM in a sense which was first introduced for the Ising model by Dyson~\cite{MR0436850} and studied rigorously in the context of quantum hopping systems with only diagonal disorder in~\cite{MR1063180, MR2352276, MR2413200, MR2909106,MR1463464, MR3649447}. In particular, the definition~\eqref{eq:Hdef} shows that the variance matrix $\Sigma_n$ is a rescaled and shifted version of the hierarchical Laplacian, for which one can easily show that $\Sigma_n \geq 0$ by explicit diagonalization as in~\cite{MR2276652}. The normalizing factor $Z_{n,c}$ can also be calculated
\begin{align*}Z_{n,c}^2 &=  \sum_{y \in B_n} \ee \left| \langle \delta_y, \left(\sum_{r =0}^n 2^{-\frac{(1+c)}{2}r} \Phi_{n,r}\right) \delta_x\rangle \right|^2 =  \left(1 - 2^{-(1+c)(n+1)} \right)\,  \frac{1 + \oh(1)}{1-2^{-(1+c)}}\\
&= \begin{cases} \oh(1) & \mbox{if } c > -1 \\ \oh\left(2^{-(1+c)n}\right) & \mbox{if } c < -1 \end{cases}
\end{align*}
so that $Z_{n,c}$ grows exponentially in $n$ in case $c < -1$ and $Z_{n,c}$ is asymptotically constant in case $ c > - 1 $. Finally, the spread
\beq{eq:spread}M_n \vcentcolon= \left(\max_{x,y \in B_n} \, \ee |\langle \delta_y, H_n\delta_x \rangle|^2 \right)^{-1} =  \begin{cases}Z_{n,c}^2 \, 2^{- o(n)}    & \mbox{if } c \geq -2\\   2^{ (1+o(1))n} & \mbox{if} \;  c < -2 \end{cases}\eeq
also grows like a positive power of the system size $2^n$ when $c < -1$.

Because of the successful track record of hierarchical approximations in statistical physics, one might expect the core features of the PRBM phase transition to be present in the ultrametric ensemble as well. Indeed, the authors of~\cite{1742-5468-2009-12-L12001} present arguments at a theoretical physics level of rigor as well as numerical evidence for a localization-delocalization transition in the eigenfunctions of $H_n$ as the parameter changes from $c > 0$ to $c < 0$. In this paper, we pursue the point of view that the effect of the Gaussian perturbations $\Phi_{n,r}$ on the spectrum of $H_n$ can be described dynamically by Dyson Brownian motion~\cite{MR0148397} and, in this sense, the critical point $c=0$ is natural because it governs whether the evolution passes the local equilibration time of this system or not. In particular, we establish the localized phase by proving that the eigenfunctions remain localized and the level statistics converge to a Poisson point process if $c > 0$. In comparison to both the random band matrices and the PRBM, the hierarchical structure therefore significantly facilitates the analysis. We are not aware of any rigorous results for the PRBM, whereas for random band matrices the only localization result is due to Schenker~\cite{MR2525652} and no proof of Poisson statistics is known.

For the first result we recall the Wegner estimate~\cite{MR639135}, which asserts that the infinite-volume density of states measure defined by
\beq{eq:dosdef}\nu(f) = \lim_{n \to \infty} 2^{-n} \sum_{\lambda_j \in \sigma(H_n)} f(\lambda_j)
\eeq
has a bounded Radon-Nikodym derivative whose values we denote by $\nu(E)$.
\begin{theorem}[Poisson statistics] \label{thm:poisson} Suppose $c > 0$ and let $E \in \rr$ be a Lebesgue point of $\nu$. Then, the random measure
\[\mu_n(f) = \sum_{\lambda \in \sigma(H_n)} f(2^n(\lambda - E))\]
converges in distribution to a Poisson point process with intensity $\nu(E)$ as $n \to \infty$.
\end{theorem}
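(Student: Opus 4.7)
The plan is to split $H_n$ into a block-diagonal reference matrix whose microscopic point process is essentially Poisson by a direct block-by-block computation, and a Gaussian tail whose effect on the eigenvalues I then transport away using the short-time flow of Dyson Brownian motion. Fix a cutoff $k=k_n$ with $k_n\to\infty$ and $n-k_n\to\infty$, and write
\[
H_n \,=\, H_n^{\le k} + P_n^{>k},\qquad H_n^{\le k}\vcentcolon=\frac{1}{Z_{n,c}}\sum_{r=0}^{k}2^{-(1+c)r/2}\Phi_{n,r}.
\]
The matrix $H_n^{\le k}$ is a direct sum of $2^{n-k}$ independent rescaled GOE blocks of size $2^{k}$, so a microscopic window $[E-L2^{-n-1},E+L2^{-n-1}]$ receives an independent sample from each block. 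Since each block has only $2^{k}\ll 2^n$ eigenvalues, the probability it contributes more than one eigenvalue to the window is of smaller order than the probability it contributes exactly one, and the expected number of eigenvalues in the window, obtained by summing the block densities of states at $E$, converges to $\nu(E)\,L$ at Lebesgue points of $\nu$ (using that $P_n^{>k}$ perturbs the DOS only by an amount controlled by its total variance). The classical Bernoulli-array Poisson limit then yields convergence of the rescaled spectrum of $H_n^{\le k}$ to a Poisson point process of intensity $\nu(E)$.

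To transport this limit to the full matrix $H_n$, I would view the tail $P_n^{>k}$ as the endpoint, at a short time $t_n$, of a Dyson Brownian motion added to $H_n^{\le k}$; the relevant time is the total entrywise variance of the tail, which is of order $2^{-(1+c)k}$ for $c>-1$. The condition $c>0$ enters precisely here: it is exactly the regime in which one can choose $k=k_n$ with $n-k_n\to\infty$ but still $(1+c)k_n\gg n$, making the DBM time $t_n$ much smaller than the microscopic scale $2^{-n}$. Applying the submicroscopic stability of the resolvent flow advertised in the abstract to the initial condition $H_n^{\le k}$, whose rich diagonal component of independent Gaussians coming from $\Phi_{n,0}$ separates levels at the relevant scale, one concludes that the eigenvalues of $H_n$ and of $H_n^{\le k}$ differ by much less than $2^{-n}$ with high probability. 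The rescaled point processes then share the same weak limit.

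The principal obstacle is the DBM stability step. Norm-based arguments of Weyl or Hoffman--Wielandt type only control displacements by $\|P_n^{>k}\|_{\mathrm{op}}\sim 2^{-(1+c)k/2}$, which at best reaches the microscopic scale when $c>1$, not the full range $c>0$ claimed in the theorem. The sharper bound must track the resolvent of $H_n^{\le k}+W(s)$ on a spectral window $\eta\ll 2^{-n}$ throughout the DBM flow, using the well-separated random diagonal piece to avoid small denominators and to keep the drift in the DBM equations under control at the sub-spacing scale. This sub-spacing resolvent stability for short times is the technical core of the paper; once it is in place, the Poisson limit for the block-diagonal reference matrix is transferred to $H_n$ and the theorem follows, with the matching of the local densities of states of $H_n^{\le k}$ and $H_n$ at the Lebesgue point $E$ an easier byproduct of the same variance estimates.
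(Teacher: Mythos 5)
Your decomposition $H_n = H_n^{\le k} + P_n^{>k}$ is exactly the paper's $H_{n,m}$ truncation, and the high-level strategy---exploit block independence of the truncated matrix for a Bernoulli-array Poisson limit, then transport to $H_n$ by short-time DBM stability---matches the paper's proof. But there is a genuine gap in the transport step.

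You propose viewing the entire tail $P_n^{>k}=\sum_{r=k+1}^n 2^{-(1+c)r/2}\Phi_{n,r}$ as the endpoint of a single Dyson Brownian motion started from $H_n^{\le k}$, with time $t_n$ equal to the total entrywise variance of the tail. This does not work: the stability result (Theorem~\ref{thm:stabthmtr}) is proved for a perturbation $\Phi_t$ that is a full GOE-structured Brownian motion, i.e., all entries have variance $t(1+\delta_{xy})/N$. The tail $P_n^{>k}$ is Gaussian, but its entry variance at position $(x,y)$ depends on $d(x,y)$ and is supported on pairs with $d(x,y)\le n$, not uniform. It cannot be realized as the endpoint of the flow~\eqref{eq:flowdef}, and the derivation of the resolvent SDE~\eqref{eq:approxflow} (whose drift is $S_t \partial_z R_t + (2N)^{-1}\partial_z^2 R_t$) breaks down for a non-uniform covariance. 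What makes the argument go through in the paper is an iteration: one compares $H_{n,k}$ to $H_{n,k-1}$ one level at a time. The single increment $2^{-(1+c)k/2}\Phi_{n,k}$ is a direct sum of $2^{n-k}$ independent GOEs of size $2^k$, so by the block decomposition~\eqref{hnmdecomposition} the comparison reduces to applying Theorem~\ref{thm:stabthmtr} to the $2^k\times 2^k$ problem $H_k = H_{k,k-1} + \sqrt{t}\,\Phi_{k,k}$ with $N=2^k$ and $t=N^{-(1+c)}$, where the theorem does apply (the diagonal Gaussian $\Phi_{k,0}$ supplies the random potential $V$). One then telescopes the resulting estimates over $k$ from $m+1$ to $n$, producing the bound $\ee|\nu_n(P_{z_\ell})-\nu_{n,m}(P_{z_\ell})| \le C_z 2^{3(\ell-(1+\delta)m)}$ with $\delta=c/6$; the geometric summability of the per-level errors is precisely what lets the scheme reach all $c>0$.

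Two smaller points. First, your claim that ``the matching of the local densities of states of $H_n^{\le k}$ and $H_n$ at the Lebesgue point $E$ is an easier byproduct of the same variance estimates'' understates the work required: the paper proves this with the same iterated stability bound, needs a further limit $p\to\infty$ to pass from $\nu_n$ to the infinite-volume $\nu$, and then uses density of the span of Poisson kernels together with the Wegner bound to upgrade to indicator test functions before invoking the Lebesgue-point hypothesis. Second, for the control of double occupancy per block you do need a Minami-type estimate (the paper cites Combes--Germinet--Klein~\cite{MR2505733}); asymptotic one-sidedness of the eigenvalue counts within a window is not automatic from $2^k\ll 2^n$ alone.
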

The proof of Theorem \ref{thm:poisson} is contained in Section \ref{sec:poisson}.

The second main result says that if an eigenfunction of $H_n$ in some mesoscopic energy interval has any mass at some $x \in B_n$, then actually all but an exponentially small amount of the total mass is carried in an exponentially vanishing fraction of the volume near $x$ with high probability. We make this precise in terms of the eigenfunction correlator
\[Q_n(x,y;W) = \sum_{\lambda \in \sigma(H_n) \cap W} |\psi_\lambda(x)\psi_\lambda(y)|,\]
which in completely delocalized regimes is typically given by
$  \sum_{y \neq x} Q_n(x,y;W) \approx 2^n|W| $.
Thus, since $2^n|W|$ grows very large for mesoscopic spectral windows, it is a signature of localization if the eigenfunction correlator asymptotically vanishes for small enough mesoscopic intervals $W$, as is proved in the following theorem.
\begin{theorem}[Eigenfunction localization]\label{thm:localization} Suppose $c > 0$ and let $E \in \rr$. Then, there exist $w, \mu, \kappa > 0$, $C<\infty$, and a sequence $m_n$ with $n - m_n \to \infty$ such that for every $x \in B_n$ the $ \ell^2$-normalized eigenfunctions satisfy
\[\pp\left( \sum_{y  \in B_n \setminus B_{m_n}(x)} Q_n(x,y;W) > 2^{-\mu n} \right) \le C \,2^{-\kappa n}\]
with
\[W = \left[E - 2^{-(1-w)n}, E + 2^{-(1-w)n}\right].\]
\end{theorem}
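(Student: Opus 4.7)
The plan is to realize $H_n$ as the endpoint of a short-time Dyson Brownian motion started from a block-diagonal reference matrix whose eigenfunctions are trivially supported on small blocks, and to propagate that localization through the flow using the resolvent stability the paper develops. Concretely, I fix $m_n$ with $n - m_n \to \infty$ growing slowly (for instance $m_n = \lfloor(1-w')n\rfloor$ for some $0 < w' < w$ to be chosen) and split
\[ H_n = \tilde H_n + V_n, \quad \tilde H_n = Z_{n,c}^{-1} \sum_{r=0}^{m_n} 2^{-\frac{(1+c)r}{2}} \Phi_{n,r}, \quad V_n = Z_{n,c}^{-1}\!\!\sum_{r=m_n+1}^n 2^{-\frac{(1+c)r}{2}} \Phi_{n,r}. \]
By construction $\tilde H_n$ is block-diagonal on the partition $\mathcal{P}_{m_n}$, so each of its eigenfunctions is supported in a single $B_{m_n}(x)$ and the corresponding eigenfunction correlator vanishes off the block. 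The increment $V_n$ is a centered Gaussian whose typical entry variance is of order $2^{-(2+c)m_n}$, which for $c > 0$ is exponentially small in $n$.

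Viewing $H_n$ as the time-$t_n$ endpoint of Dyson Brownian motion started at $\tilde H_n$, with $t_n$ matched to the entry variance of $V_n$, I intend to invoke the paper's short-time stability estimate for matrix entries of the resolvent to obtain, at scale $\eta = 2^{-(1-w)n}$, a high-probability bound of the form
\[ \sup_{E \in W} |G_n(E+i\eta)(x,y)| \le 2^{-\kappa_1 n} \quad \text{for all $y$ with $d(x,y) > m_n$,} \]
valid with probability at least $1 - C 2^{-\kappa n}$. Because the initial resolvent $(\tilde H_n - z)^{-1}$ vanishes identically across the blocks of $\mathcal{P}_{m_n}$, the DBM stability only needs to keep these entries exponentially small after time $t_n$, which is consistent with $\sqrt{t_n}\ll \eta$. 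To pass from resolvent decay to the eigenfunction correlator, I use the standard inequality
\[ Q_n(x, y; W) \le C |W|\, \sup_{E \in W} |G_n(E + i\eta)(x,y)| + R_n, \]
where $R_n$ is a smoothing error bounded via the Wegner estimate recalled just before~\eqref{eq:dosdef}. Summing over the at most $2^n$ indices $y \notin B_{m_n}(x)$ gives an expected total of order $2^n \cdot 2^{-(1-w)n} \cdot 2^{-\kappa_1 n} = 2^{-(\kappa_1 - w)n}$; choosing $w < \kappa_1 - \mu$ and applying Markov together with a union bound over the failure event for the resolvent estimate yields the probabilistic bound of the theorem.

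The main technical obstacle is expected to be the short-time resolvent stability itself, not the reduction above. Although $V_n$ has small operator norm, of order $2^{-(1+c)m_n/2}$, this is not small enough at the scale $\eta^2$ required for naive perturbation of $(\tilde H_n - z)^{-1}$, so I cannot simply bound $\|G_n - (\tilde H_n - z)^{-1}\|$. Instead, the argument must track the stochastic evolution of the individual off-block entries of $G_t(z)$ along DBM and show that they remain exponentially small in $d(x,y)$ over the short interval $[0, t_n]$, presumably by exploiting both the block structure of the initial condition and the hierarchical decay of the variance profile of $V_n$. This entrywise, submicroscopic control is precisely what the abstract advertises as going beyond norm-based continuity for Dyson Brownian motion and is the input I expect the bulk of the paper's analysis to establish.
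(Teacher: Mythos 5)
Your high-level plan is aligned with the paper: compare $H_n$ to the truncated block-diagonal $H_{n,m}=\sum_{r\le m}2^{-(1+c)r/2}\Phi_{n,r}$, use the DBM stability results to control the off-block Green function entries, and translate that into a bound on the eigenfunction correlator. But two of the steps you treat as routine are in fact the crux, and as written they would not go through.

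First, you cannot realize the passage from $\tilde H_n=H_{n,m_n}$ to $H_n$ as a \emph{single} DBM step. Theorem~\ref{thm:stabthm} controls a perturbation $\Phi_t$ with uniform GOE-type variance profile $\sim 1/N$ per entry, but the increment $V_n=\sum_{r=m_n+1}^n 2^{-(1+c)r/2}\Phi_{n,r}$ has a genuinely hierarchical variance profile, not a constant one, so it is not a DBM increment from $\tilde H_n$ in the sense required by~\eqref{eq:flowdef}. Even after matching the largest entry variance, the total variance per row is dominated by the $r\approx n$ levels and forces an effective time parameter of order $2^{n-(2+c)m_n}$, which violates the hypothesis $t\le N^{-(1+c)}$ unless $m_n\ge n$. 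The paper's proof avoids this by \emph{iterating} over scales $k=m_n+1,\dots,n$: the step from $H_{n,k-1}$ to $H_{n,k}$ adds a single $\Phi_{n,k}$, which on each $2^k$-block is a genuine GOE-type perturbation with $N=2^k$ and $t=2^{-(1+c)k}$, so Theorem~\ref{thm:stabthm} applies scale by scale, and the telescoping sum is summable thanks to the gap $c>0$. This iteration is essential, not cosmetic.

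Second, the ``standard inequality''
\[
Q_n(x,y;W)\le C\,|W|\sup_{E\in W}|G_n(x,y;E+i\eta)|+R_n
\]
does not hold; it is defeated by sign cancellations. The correlator is the total variation $|\mu_{xy}|(W)=\sum_{\lambda\in W}|\psi_\lambda(x)\psi_\lambda(y)|$, and eigenvalues in $W$ with opposite signs of $\psi_\lambda(x)\psi_\lambda(y)$ can make $|\im G|$ arbitrarily small while $Q_n$ stays large. This is precisely why the paper has to prove Theorem~\ref{thm:qbound} as a \emph{new} relation (the introduction flags it as such): it constructs a sign-matched test function $f$ supported on small disjoint windows around each eigenvalue, shows $\mu_{xy}(f\ast P_{i\eta})\gtrsim Q_n$ on a good event, and uses Lemma~\ref{thm:qlemma2} (a Minami-type spacing estimate) to make these windows disjoint with high probability, plus Lemma~\ref{thm:qlemma1} to control the leakage into $W^c$. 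None of that comes from the Wegner estimate alone. Relatedly, the paper never needs a sup-in-$E$, entrywise bound on $G_n$ — it works with $\sum_y\ee\int_W|\im G_n(x,y;E+i\eta)|\,dE$, which is exactly what the averaged first-moment estimate of Theorem~\ref{thm:stabthm} and the integral in Theorem~\ref{thm:qbound} are designed to match. Asking for a high-probability sup bound at scale $\eta\ll N^{-1}$ is both harder and unnecessary.
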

The proof of Theorem \ref{thm:localization} in Section \ref{sec:localization} makes use of a new relation (Theorem \ref{thm:qbound}) between the eigenfunction correlator and the imaginary part of the Green function at complex energies.

The above results gain additional interest upon noting that, when $c < -1$, the ultrametric ensemble has an essential mean field character and techniques originally developed for Wigner matrices show that the energy levels agree asymptotically with those of the GOE and that the eigenfunctions are completely delocalized. We will now roughly sketch how to apply these results in the present situation and state the corresponding theorems. The results of \cite{MR3068390} show that the semicircle law $\rho_{sc}(E) = \sqrt{(4-E^2)_+}/(2\pi)$ is valid on scales of order $M_n^{-1}$ (c.f.~\eqref{eq:spread}) also for the matrices
\[\widetilde{H}_n =  \frac{1}{Z_{n,c}} \sum_{r =0}^{n-1} 2^{-\frac{(1+c)}{2}r} \Phi_{n,r} + \frac{1-\sqrt{T_n}}{Z_{n,c}} 2^{-\frac{(1+c)}{2}n} \Phi_{n,n} \]
with a small part of the final $\oh(1)$ Gaussian component removed. We set $ T_n = M_n^{-1+\delta} $ with $ \delta \in (0,1) $. The validity of the local semicircle law already implies the complete delocalization of the eigenfunctions in mesoscopic windows in the bulk of the spectrum using the results of~\cite[Thm. 2.21]{0036-0279-66-3-R02}. The assumption of~\cite{0036-0279-66-3-R02} that the spectrum of the variance matrix is well-separated from $-1$ follows simply from $\Sigma_n \geq 0$.

\begin{theorem}[Eigenfunction delocalization; cf. \cite{0036-0279-66-3-R02, MR3068390}] Let $c < -1$. For any compact interval $ I \subset (-2,2) $ there exist $\kappa, \epsilon > 0$ such that for all $ E \in I $ the $ \ell^2$-normalized eigenfunctions of $H_n$ in $[E-M_n^{-1}, E+M_n^{-1}]$ satisfy
\[\|\psi_{\lambda}\|_\infty =\oh(M_n^{-1/2 + \epsilon})\]
with probability $1 - \oh(N^{-\kappa})$.
\end{theorem}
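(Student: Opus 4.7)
The plan is to first establish a local semicircle law for the modified matrix $\widetilde{H}_n$, then apply the general Green-function-to-eigenvector bootstrap of \cite[Thm.~2.21]{0036-0279-66-3-R02}, and finally transport the conclusion from $\widetilde{H}_n$ to $H_n$ via a short-time Dyson Brownian motion argument.

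First I would verify that $\widetilde{H}_n$ meets the hypotheses of \cite{MR3068390}. Its entries are centered and jointly Gaussian, its variance matrix $\widetilde{\Sigma}_n$ differs from $\Sigma_n$ only by the positive factor $(1-\sqrt{T_n})^2$ in the final block and is therefore still nonnegative with row sums $1-\oh(1)$, and its maximal entry variance is of order $M_n^{-1}$ by the definition \eqref{eq:spread} of $M_n$. Invoking \cite{MR3068390} then yields an isotropic Green-function estimate for $\widetilde{H}_n$ uniformly on spectral domains with $\im z \geq M_n^{-1+\epsilon'}$ and $\re z \in I$. Combined with the fact that $\widetilde{\Sigma}_n \geq 0$ is spectrally separated from $-1$, this brings us within the framework of \cite[Thm.~2.21]{0036-0279-66-3-R02}, which converts the isotropic law into the pointwise eigenvector bound $\|\tilde{\psi}_\lambda\|_\infty = \oh(M_n^{-1/2+\epsilon})$ for all $\ell^2$-normalized eigenfunctions of $\widetilde{H}_n$ with eigenvalue in $I$, with probability $1 - \oh(N^{-\kappa})$.

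It then remains to transfer this delocalization to $H_n$. The key observation is that
\[H_n = \widetilde{H}_n + \frac{\sqrt{T_n}}{Z_{n,c}}\, 2^{-\frac{1+c}{2}n}\, \Phi_{n,n},\]
where the perturbation is an independent Gaussian increment whose entry variances are precisely of the order expected for a time-$T_n$ step of the Dyson Brownian motion that underlies the analysis of this paper. The submicroscopic DBM stability estimates developed in the main body of the paper therefore propagate the isotropic Green-function bounds from $\widetilde{H}_n$ to $H_n$. A second application of \cite[Thm.~2.21]{0036-0279-66-3-R02} then yields the stated pointwise bound on the eigenfunctions of $H_n$.

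The main obstacle lies in this final step. The operator norm of $H_n - \widetilde{H}_n$ is of order $\sqrt{T_n} = M_n^{-1/2+\delta/2}$, which is far larger than the target resolution scale $M_n^{-1}$, so Weyl-type continuity arguments in operator norm give nothing useful at the relevant scale. The proof consequently relies in an essential way on the short-time submicroscopic DBM stability estimates of this paper rather than on any norm-based perturbation bound.
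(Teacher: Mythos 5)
Your first two steps (verifying the hypotheses of \cite{MR3068390} and invoking \cite[Thm.~2.21]{0036-0279-66-3-R02}) are the substance of the paper's argument, but the detour through $\widetilde{H}_n$ followed by a transfer back to $H_n$ is both unnecessary and, in the form you describe, not available. The paper simply applies the isotropic local semicircle law of \cite{MR3068390} and the eigenvector delocalization criterion of \cite[Thm.~2.21]{0036-0279-66-3-R02} directly to $H_n$; its variance matrix $\Sigma_n$ is exactly doubly stochastic, $\Sigma_n\geq 0$ gives the required spectral gap from $-1$, and the maximal entry variance is $M_n^{-1}$ by \eqref{eq:spread}, which is all that is needed. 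The auxiliary matrix $\widetilde{H}_n$ enters only in the \emph{next} theorem (WDM statistics), where one needs a local law at the \emph{initial data} of the Landon--Sosoe--Yau Dyson Brownian motion in order to run the universality argument over the time $T_n$.

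The transfer step you propose is a genuine gap. The DBM stability results of this paper, Theorems~\ref{thm:stabthmtr} and~\ref{thm:stabthm}, hold only for $t\leq N^{-(1+c')}$ with $c'>0$, i.e.\ \emph{below} the local equilibration time $N^{-1}$. Here $T_n=M_n^{-1+\delta}$ with $M_n\leq N^{1+o(1)}$, so $T_n\geq N^{-(1-\delta)(1+o(1))}\gg N^{-1}$; the time is on the wrong side of $N^{-1}$ and those theorems say nothing. Moreover, even in their range of validity, Theorems~\ref{thm:stabthmtr} and~\ref{thm:stabthm} control only the trace $S_t$ and the row average $\frac{1}{N}\sum_y|G_t(x,y;z)-G_0(x,y;z)|$; they do not give entrywise (isotropic) control, and in particular cannot track the single diagonal entry $G_t(x,x;z)$ to the precision that \cite[Thm.~2.21]{0036-0279-66-3-R02} requires. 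So neither the time scale nor the type of estimate matches what your final step needs. Simply drop the detour: apply \cite{MR3068390} and \cite[Thm.~2.21]{0036-0279-66-3-R02} to $H_n$ itself.
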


Random matrix universality of the local statistics may be expressed by saying that the $k$-point correlation functions
\[\rho^{(k)}_{H_n}(\lambda_1, .. \lambda_k) =  \int_{\rr^{2^n - k}} \! \rho_{H_n} (\lambda_1, \dots, \lambda_{2^n}) \, d\lambda_{k+1} \dots \, d\lambda_{2^n},\]
the $k$-th marginals of the symmetrized eigenvalue density $\rho_{H_n}$, locally agree with the corresponding objects for the GOE asymptotically. For this, we employ the work of Landon, Sosoe and Yau \cite[Thm.~2.2]{landonsosoeyauarxiv} concerning the universality of Gaussian perturbations for
\[H_n = \widetilde{H}_n +  \frac{\sqrt{T_n}}{Z_{n,c}} 2^{-\frac{(1+c)}{2}n} \Phi_{n,n}.\]
For the statement of the theorem, let
\begin{align*}\Psi_{n,E}^{(k)}(\alpha_1,\dots,\alpha_k) &= \rho^{(k)}_{H_n}\left( E + 2^{-n} \frac{\alpha_1}{\rho_{sc}(E)}, \dots, E + 2^{-n} \frac{\alpha_k}{\rho_{sc}(E)} \right)\\
 &-  \rho^{(k)}_{GOE}\left( E + 2^{-n} \frac{\alpha_1}{\rho_{sc}(E)}, \dots, E + 2^{-n} \frac{\alpha_k}{\rho_{sc}(E)} \right),
\end{align*}
where $\rho^{(k)}_{GOE}$ is the $k$-point correlation function of the $2^n \times 2^n$ GOE and $\rho_{sc}$ is the density of the semicircle law.
\begin{theorem}[WDM statistics; cf.~\cite{landonsosoeyauarxiv,MR3068390}] Suppose $c < -1$, $E \in (-2,2)$ and $k \geq 1$. Then,
\[\lim_{n \to \infty} \int_{\rr^k} \! O(\alpha) \Psi_{n,E}^{(k)}(\alpha) \, d\alpha = 0\]
for every $O \in C_c^\infty(\rr^k)$.
\end{theorem}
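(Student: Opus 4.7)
The proof is a direct application of the universality theorem for small Gaussian perturbations of matrices satisfying a local semicircle law. The decomposition
\[
H_n = \widetilde H_n + \frac{\sqrt{T_n}}{Z_{n,c}}\, 2^{-(1+c)n/2}\, \Phi_{n,n}
\]
expresses $H_n$ as $\widetilde H_n$ plus an independent Gaussian block. A short computation using $\mathbb{E}\,|\langle \delta_y, \Phi_{n,n}\delta_x\rangle|^2 \asymp 2^{-n}$ and $Z_{n,c}^2 \asymp 2^{-(1+c)n}$ for $c < -1$ shows that the perturbation equals $\sqrt{T_n}\cdot W$, with $W$ an appropriately normalized GOE of entry variance $2^{-n}$. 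Consequently, $H_n$ has the law of $\widetilde H_n$ evolved under Dyson Brownian motion for time $T_n = M_n^{-1+\delta}$, which is the precise input form required by \cite[Thm.~2.2]{landonsosoeyauarxiv}.

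The first step is to establish the hypotheses of that theorem for $\widetilde H_n$. The variance matrix of $\widetilde H_n$ differs from $\Sigma_n$ only through the shrinkage of the $r=n$ block, and $\Sigma_n \geq 0$ by the explicit diagonalization of~\cite{MR2276652}; doubly stochastic variance, the spectral gap from $-1$, and the flatness parameter $M_n$ of~\eqref{eq:spread} therefore all hold for $\widetilde H_n$. These are exactly the inputs to the generalized local semicircle law of \cite{MR3068390}, which supplies eigenvalue rigidity and isotropic delocalization at scale $M_n^{-1}$.

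With these inputs in hand, one invokes \cite[Thm.~2.2]{landonsosoeyauarxiv}. Because $c < -1$ forces $M_n$ to grow like a positive power of $2^n$, the scales satisfy $M_n^{-1} \ll T_n \ll 1$, so the Gaussian component is simultaneously large enough to trigger GOE universality at the microscopic scale and small enough to preserve the bulk density at $\rho_{sc}$. The conclusion of the theorem, after rescaling the spectrum by $2^n/\rho_{sc}(E)$ around $E$, is exactly that $\int O(\alpha)\, \Psi^{(k)}_{n,E}(\alpha)\, d\alpha \to 0$ for every $O \in C_c^\infty(\mathbb{R}^k)$. The main bookkeeping obstacle is to match the strength and scale of the local law extracted from \cite{MR3068390} to the precise input hypotheses of the Landon--Sosoe--Yau argument, and in particular to transfer the rigidity and isotropic delocalization estimates to the form used there; this is routine given the explicit hierarchical structure of $\Sigma_n$, but it is the only point in the argument where there is real work to be done.
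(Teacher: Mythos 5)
Your proposal is correct and follows essentially the same route the paper takes: write $H_n = \widetilde H_n + \sqrt{T_n}\cdot(\text{GOE-like block})$ with $T_n = M_n^{-1+\delta}$, verify the local semicircle law and rigidity for $\widetilde H_n$ at scale $M_n^{-1}$ from~\cite{MR3068390} (using $\Sigma_n \geq 0$ to get the spectral gap from $-1$), and invoke~\cite[Thm.~2.2]{landonsosoeyauarxiv} since $M_n^{-1} \ll T_n \ll 1$ for $c < -1$. The paper gives no more detail than this either, so the match is essentially complete.
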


Summing up, these results rigorously prove the existence of a metal-insulator transition in the ensemble of ultrametric random matrices. In particular, our results allow an approach all the way to the critical point from the localized side $ c > 0 $. The previous arguments do not cover the regime $ c \in [-1,0) $. In this case, the eigenfunctions are still conjectured to be completely delocalized and the local eigenvalue statistics are conjectured to remain in the Wigner-Dyson-Mehta universality class as in the case $ c < -1 $ \cite{1742-5468-2009-12-L12001}.

The Rosenzweig-Porter model is the $N \times N$ random matrix
\[H_t = V + \sqrt{t} \, \Phi_{n,n} \]
with $ N = 2^n  $, interpolation parameter $ t = N^{-(1+c)}$, and independent random potential
\beq{pot} V = \sum_{x} V(x) \ketbra{\delta_x}{\delta_x}
\eeq
whose entries are drawn independently from some density $\varrho \in L^\infty$. It provides a standard interpolation between integrability and chaos and more recently has also been suggested as a toy model for many-body localization with three distinct phases~\cite{0295-5075-115-4-47003, 1367-2630-17-12-122002}. As a by-product of our analysis, we completely characterize the localized phase $c> 0$ of this model, which may be thought of as a hierarchical model with the ``intermediate layers'' removed.

\begin{theorem}[Rosenzweig-Porter model]\label{thm:rpthm} Suppose $t \le N^{-(1+c)}$ with $c > 0$ and let $E \in \rr$. Then:
\begin{enumerate}
\item As $N \to \infty$, the random measure defined by
\[\mu_N(f) = \sum_{\lambda \in \sigma(H_t)} f(N(\lambda - E))\]
converges in distribution to a Poisson point process with intensity $\varrho(E)$ provided $E$ is a Lebesgue point of $\varrho$.
\item There exist $w, \mu, \kappa > 0$ and $C<\infty$ such that for every $x \in \{1,\dots,N\}$ the $\ell^2$-normalized eigenfunctions satisfy
\[\pp\left(\sum_{\lambda \in \sigma(H_t) \cap W}  \sum_{y \neq x}|\psi_\lambda(x)\psi_\lambda(y)| > N^{-\mu} \right) \le C N^{-\kappa}\]
with
\[W = \left[E - N^{-(1-w)}, E + N^{-(1-w)}\right].\]
\end{enumerate}
\end{theorem}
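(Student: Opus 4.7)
The Rosenzweig-Porter matrix $H_t = V + \sqrt{t}\,\Phi_{n,n}$ is precisely the state at time $t$ of Dyson Brownian motion started from the diagonal matrix $V$. At $s=0$ the eigenvalues are the iid samples $\{V(x)\}_{x=1}^N$ with density $\varrho$ and the eigenfunctions are the perfectly localized basis vectors $\{\delta_x\}$, so both conclusions of the theorem hold trivially for $H_0$. The hypothesis $c>0$ places $t \le N^{-(1+c)}$ strictly below the local equilibration scale $N^{-1}$ of DBM, and the theorem reduces to showing that this short evolution does not destroy either feature at the submicroscopic level. This is the same mechanism that drives Theorems~\ref{thm:poisson} and~\ref{thm:localization} in the ultrametric setting, stripped of the hierarchical layers: $V$ plays the role of the coarse diagonal initial condition, and only the single Gaussian increment $\sqrt{t}\,\Phi_{n,n}$ needs to be propagated through.

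\textbf{Poisson convergence.} Let $\mu_N^V(f) = \sum_x f(N(V(x)-E))$. Since $V(x)$ are iid with $L^\infty$ density $\varrho$, classical Poisson convergence gives $\mu_N^V \Rightarrow $ Poisson with intensity $\varrho(E)$ at every Lebesgue point. It then suffices to show $\mu_N(f) - \mu_N^V(f) \to 0$ in probability for all $f \in C_c(\rr)$. I would pair eigenvalues $\lambda_x(H_t)$ with $V(x)$ and prove $\max_x |\lambda_x(H_t) - V(x)| = \oh(N^{-1})$ on the $N^{-1}$ window around $E$. The tool is the Schur complement representation
\[G_{xx}(z) = \bigl[V(x) - z + \sqrt{t}\,\Phi_{xx} - t\,\langle\delta_x,\Phi P_x^\perp (H_t^{(x)}-z)^{-1} P_x^\perp \Phi\,\delta_x\rangle\bigr]^{-1},\]
in which the linear correction $\sqrt{t}\,\Phi_{xx}$ has size $\oh(\sqrt{t/N}) = \oh(N^{-1-c/2})$, and the quadratic form concentrates around $t\,m^{(x)}(z)$ with fluctuations of order $\sqrt{t^2/N}$. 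Both corrections are $\oh(N^{-1})$ when $c>0$, which yields the required submicroscopic alignment of spectra.

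\textbf{Localization.} Setting $\eta = N^{-(1-w)}$ and applying Theorem~\ref{thm:qbound} reduces the eigenfunction correlator over $W$ to a spectral integral of $|G_{xy}(E'+i\eta)|^2$. Since $(V-z)^{-1}$ is diagonal, $G_{xy}$ starts at zero for $y \neq x$, so the task is to bound the increment under DBM for time $t$. The first-order resolvent expansion gives $G_{xy}(z) \approx -\sqrt{t}\,\Phi_{xy}/[(V(x)-z)(V(y)-z)]$, so typical $\ee |G_{xy}(z)|^2 \lesssim (t/N)\,|V(x)-z|^{-2}|V(y)-z|^{-2}$. Summing over $y \neq x$ and integrating over $W$ produces a total of order $\oh(\sqrt{tN}\,(\log N)\,N^w) = \oh(N^{w-c/2}\log N)$, which is $\oh(N^{-\mu})$ for any $\mu < c/2 - w$ once $w < c/2$. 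Higher-order terms in the resolvent series and the passage from in-expectation to high-probability bounds are handled by Gaussian concentration of quadratic forms in $\Phi$ together with a union bound over the spectrum of $V$.

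\textbf{Main obstacle.} The technical core of both parts is the resolvent control at the submicroscopic scale $\eta \sim N^{-1+w}$, which sits below the mean spacing of $\{V(x)\}$ and for which the naive bound $\|G(z)\| \le \eta^{-1}$ is too weak. Anomalously small denominators $|V(y) - z|$ can inflate the perturbative expansion, and the quantitative control of these near-resonances uses the Wegner-type input
\[\pp\bigl(\#\{y : |V(y) - E'| \le \delta\} \ge k\bigr) \le (CN\delta)^k / k!,\]
which is available because $\varrho \in L^\infty$. Balancing the exponents $w, \mu, \kappa$ against the hypothesis $c > 0$ so that near-resonant events are probabilistically negligible while the bulk sum remains summably perturbative is the delicate calibration that completes the proof.
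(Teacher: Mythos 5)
Your proposal takes a genuinely different route from the paper. The paper's proof of Theorem~\ref{thm:rpthm} is a short corollary of Theorems~\ref{thm:stabthmtr} and~\ref{thm:stabthm}: because $H_t$ is literally the DBM flow started from the diagonal matrix $V$, one simply notes $\mu_N(P_z) = \im S_t(z_N)$ with $z_N = E + z/N$, so the difference of characteristic functionals of $\mu_N$ and $\mu_{N,0}$ is controlled by $\ee|S_t(z_N)-S_0(z_N)| \le CN^{-c/2}$ from Theorem~\ref{thm:stabthmtr}; and since $G_0(x,y;\cdot)=0$ for $y\neq x$, Theorem~\ref{thm:stabthm} directly bounds $\sum_{y\neq x}\ee|\im G_t(x,y;E+i\eta)|$, which feeds into Theorem~\ref{thm:qbound}. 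You instead propose explicit perturbation theory (Schur complement for the eigenvalues, first-order resolvent expansion for $G_{xy}$), effectively re-deriving what the flow estimates give you. In principle this could be made to work, but you lose the main advantage of the paper's strategy: the It\^{o} derivation of \eqref{eq:approxflow} plus the spectral-averaging smoothing of Section~\ref{sec:smooth} systematically control {\em all} orders of the perturbation at once, including the near-resonant contributions you acknowledge as the main obstacle. In your scheme these resonances have to be handled by hand, and the passage from your in-expectation bounds to the high-probability statement via ``Gaussian concentration plus union bound'' is not spelled out and is precisely where the difficulty lies.

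Two concrete problems in the localization part. First, you set $\eta = N^{-(1-w)}$ when applying Theorem~\ref{thm:qbound}, but that theorem requires $\eta = N^{-(1+\ell)}$ with $\ell > w > 0$, i.e.\ a {\em submicroscopic} scale strictly below the mean spacing $N^{-1}$. With the mesoscopic $\eta = N^{-(1-w)}$ the Poisson kernel $P_{E'+i\eta}$ is wider than the window $W$ itself and the equivalence between $Q(x,y;W)$ and the Green-function integral breaks down; indeed the whole point of Theorem~\ref{thm:qbound} is that the integrand $|\im G(x,y;E'+i\eta)|$ with $\eta\ll N^{-1}$ resolves individual eigenvalues. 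Second, the reduction in Theorem~\ref{thm:qbound} is to $\int_W |\im G(x,y;E'+i\eta)|\,dE'$, not to $|G_{xy}|^2$; the absence of the square is essential to the exponent counting (it is what produces the $(N\eta)^{-3}$ rather than $(N\eta)^{-6}$ loss, and matches the $\frac{1}{N}\sum_y\ee|\cdot|$ averaging in Theorem~\ref{thm:stabthm}). The correct parameter balance is the one recorded in the paper: choose $\ell > w > 0$ and $\mu_0 > 0$ with $3\ell + w + 2\mu_0 \le c/2$, set $\eta = N^{-(1+\ell)}$, and then Theorem~\ref{thm:stabthm} gives
\[
\ee \sum_{y\neq x}\int_W |\im G_t(x,y;E'+i\eta)|\,dE' \le C|W|\,N\cdot N^{-c/2}(N\eta)^{-3} \le N^{-2\mu_0},
\]
after which Markov and Theorem~\ref{thm:qbound} close the argument. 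Your heuristic $O(N^{w-c/2}\log N)$ omits the $N^{3\ell}$ price of pushing $\eta$ below $N^{-1}$, which is not optional.

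For the Poisson part, your strategy of pairing $\lambda_x(H_t)$ with $V(x)$ and proving $o(N^{-1})$ alignment is more ambitious than what is needed and introduces an eigenvalue-labelling problem (crossings, ordering) that the paper entirely avoids by working with $\im S_t(z_N)$ rather than individual eigenvalues. Moreover, the concentration of the Schur-complement quadratic form around $t\,m^{(x)}(z)$ is delicate at $\im z \sim N^{-1}$: the entries of $(H_t^{(x)}-z)^{-1}$ can be as large as $\eta^{-1}\sim N$, so the ``fluctuations of order $\sqrt{t^2/N}$'' claim needs a Wegner-type a~priori bound on those entries, which is circular without the spectral-averaging machinery of Lemma~\ref{thm:specavgext}. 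The paper sidesteps all of this; if you want to pursue the Schur-complement route you would essentially have to re-prove the content of Section~\ref{sec:smooth} from scratch.
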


A proof of this result, which is similar to but simpler than those of Theorems~\ref{thm:poisson} and~\ref{thm:localization}, is included in Appendix~\ref{sec:RosPorter}.

If $ t = N^{-(1+c)} $ with  $c < 0$ the prerequisites of~\cite[Thm.~2.2]{landonsosoeyauarxiv} may be verified for the Rosenzweig-Porter model by an exponential moment calculation similar to Cram\'{e}r's theorem. This proves the emergence of Wigner-Dyson-Mehta statistics in the fixed-energy sense. The first point of Theorem~\ref{thm:rpthm} thus optimally complements these results and completes the mathematical understanding of the phase transition in the Rosenzweig-Porter model in terms of the local statistical behavior of the energy levels. The second point proves localization in the same sense as Theorem~\ref{thm:localization}, and also yields an explicit relation between $w,\mu,\kappa$ and $c$, which shows that $w,\mu,\kappa$ may increase if $c$ increases as well. If $c \le -1$, the complete delocalization of the eigenfunctions was proved by Lee and Schnelli~\cite{MR3134604} as a corollary to a local law. In terms of the eigenvalue statistics, the non-ergodic delocalized phase established in~\cite{nonergodic,benignibourgade} can presumably only be detected by mesoscopic Poissonian fluctuations around the microscopic Wigner-Dyson-Mehta statistics (see also \cite{duitsjohansson,landonhuangarxiv}).

\section{Local Stability of Dyson Brownian Motion}
In the technical core of this paper, we consider a general $N \times N$ random matrix flow
\beq{eq:flowdef} H_t = T + V + \Phi_t,
\eeq
where $\Phi_t$ is a Brownian motion in the space of symmetric matrices whose entries are given by
\beq{eq:entrydef} \langle \delta_y, \Phi_t \delta_x \rangle = \sqrt{\frac{1 + \delta_{xy}}{N}}B_{xy}(t)
\eeq
with independent standard Brownian motions $B_{xy}(t) = B_{yx}(t)$. The random potential $ V $ is given by \eqref{pot} in terms of random variables $ \{ V(x) \} $ which are independent of the Brownian motions and whose conditional distributions are uniformly Lip\-schitz continuous, i.e.,
\beq{eq:potassumption}\pp\left(V(x) \in I \, | \, \{V(y)\}_{y \neq x} \right) \le C_V|I|,
\eeq
for all Borel sets $I \subset \rr$ and $x \in \{1,\dots,N\}$ with a constant $C_V < \infty$ independent of $N$. Finally, $T$ is some real symmetric $N \times N$ matrix, which may also be random provided $T$, $V$, and $\Phi_t$ remain independent. Dyson derived the equations
\[d\lambda_j(t) = \sqrt{\frac{2}{N}}dB_j(t) + \frac{1}{N}\sum_{i \neq j} \frac{dt}{\lambda_j(t) - \lambda_i(t)}\]
for the evolution of the eigenvalues $\lambda_1(t) \le \dots \le \lambda_N(t)$ and conjectured $t = N^{-1}$ as the local equilibration time of this system. Thus, one expects that the local statistics of the eigenvalues agree asymptotically as $N \to \infty$ with those of the GOE if $t \gg N^{-1}$. This was first proved by Erd\H{o}s, Schlein, and Yau~\cite{MR2810797} when $H_0$ is an independent Wigner matrix and recently for some very general, even deterministic, initial conditions by Landon, Sosoe, and Yau~\cite{landonsosoeyauarxiv, landonyauarxiv}. In particular, the result  \cite{landonsosoeyauarxiv}, which establishes fixed-energy universality  in the bulk of the spectrum for $ t  \gg N^{-1}$, covers the case $ T = 0 $ with asymptotically full probability. The eigenfunctions of $H_t$ also follow a highly singular stochastic differential equation, which has been studied for Wigner initial conditions by Bourgade and Yau~\cite{MR3606475}.

All of the aforementioned works rely on powerful rigidity estimates for the eigenvalues in the regime $t \gg N^{-1}$, which are not available for $t \ll N^{-1}$, where the spectral characteristics of $H_0 = T+V$ are expected to remain dominant. In this article, we study the stability of the spectral measures in the regime $t \ll N^{-1}$ by deriving a more tractable stochastic differential equation for the resolvent
\[R_t(z) = (H_t - z)^{-1},\]
which is significantly more amenable to analysis but still carries the relevant spectral information in its entries. In place of the rigidity estimates, the enabling tool for the analysis of the resolvent flow is the smoothing of spectral quantities with the external potential via spectral averaging (see Section~\ref{sec:flowsec} for details).

Our first result for $H_t$ is about the normalized trace
\[S_t(z) = \frac{1}{N} \tr R_t(z) = \int \! \frac{1}{\lambda - z} \, \nu_t(d \lambda),\]
which we have written as the Stieltjes transform of the empirical eigenvalue measure
\[\nu_t = \frac{1}{N} \sum_{\lambda \in \sigma(H_t)} \delta_{\lambda}.\]
Thus $S_t(z)$ contains detailed local information about the eigenvalues of $H_t$ as $z$ approaches the real axis. In particular, since the mean eigenvalue spacing of $H_t$ is typically of order $N^{-1}$, knowledge of $S_t(z)$ with $\im z \approx N^{-1}$ makes it possible to track individual eigenvalues near $\re z$ along the flow~\eqref{eq:flowdef}. The following theorem shows that $S_t(z)$ remains stable even when $\im z \ll N^{-1}$ provided that also $t \ll N^{-1}$.

\begin{theorem} \label{thm:stabthmtr} For every $c > 0$, there exists $C < \infty$, depending only on $c$ and $C_V$, such that
\[\ee \left|S_t\left(E + i\eta \right) - S_0\left(E + i\eta\right)\right| \le C  N^{-c/2} \left(1 + \frac{1}{N\eta} + \frac{1}{(N\eta)^3}\right)\]
for all $t \le N^{-(1 + c)}$ and $E \in \rr$.
\end{theorem}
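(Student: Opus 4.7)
The plan is to derive a stochastic differential equation for $S_t(z)$ via It\^o's formula applied to the resolvent $R_t(z) = (H_t - z)^{-1}$ along the flow~\eqref{eq:flowdef}. Using the covariance $\ee[dH_{ab}\,dH_{cd}] = N^{-1}(\delta_{ac}\delta_{bd} + \delta_{ad}\delta_{bc})\,dt$ and taking the normalized trace gives
\[
dS_t(z) \;=\; dM_t(z) + \Bigl(S_t(z)\,\partial_z S_t(z) + \tfrac{1}{2N}\partial_z^2 S_t(z)\Bigr)\,dt,
\]
where $M_t$ is a local martingale with $d\langle M,\overline M\rangle_t = \tfrac{1}{N^3}\sum_{xy}(1+\delta_{xy})|(R_t^2)_{xy}|^2\,dt$. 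Integrating decomposes $S_t - S_0 = M_t + D_t$ into a martingale and a drift, to be analyzed separately.

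The central a priori input is spectral averaging. Writing $R_s(x,x) = (V(x) - z - \alpha_s^{(x)}(z))^{-1}$ via the Schur complement, with $\alpha_s^{(x)}$ independent of $V(x)$ and $\im\alpha_s^{(x)}\ge 0$, and invoking~\eqref{eq:potassumption} yields
\[
\ee\,\im S_s(E+i\eta) \le \pi C_V \qquad\text{and}\qquad \ee|S_s(E+i\eta)|^2 \le \pi C_V/\eta
\]
uniformly in $s \ge 0$ and $\eta > 0$. It is precisely this uniformity in $\eta$ that enables control below the microscopic scale $1/N$, where norm-based continuity arguments break down.

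For the martingale part, the Hilbert--Schmidt identity $\sum_{xy}|(R_s^2)_{xy}|^2 \le \|R_s\|^2\tr|R_s|^2 \le N\,\im S_s/\eta^3$, It\^o's isometry, and the first spectral-averaging estimate give $\ee|M_t|^2 \le 2\pi C_V\,t/(N^2\eta^3)$. Substituting $t \le N^{-(1+c)}$ and taking the square root yields $\ee|M_t| \le C N^{-c/2}(N\eta)^{-3/2}$, which is absorbed by the right-hand side of the theorem.

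The main obstacle is the drift $D_t$. The crude pointwise bounds $|\partial_z^k S| \le k!\,\im S/\eta^k$ and $|S|\le\eta^{-1}$, combined with spectral averaging, yield only $\ee|D_t| \le Ct/\eta^2$, which fails at submicroscopic scales. The remedy is to exploit the structure of the drift: rewriting $S_s\partial_z S_s = \tfrac12\partial_z S_s^2$ and expanding $S_s = S_0 + (S_s - S_0)$ isolates a leading term $\tfrac{t}{2}\partial_z S_0^2$, deterministic in $s$, together with a remainder that is bilinear in $S_s - S_0$ and $S_s + S_0$. The leading term is controlled via spectral averaging applied directly to $H_0 = T + V$ and a Cauchy integral representation that trades a $\partial_z$ for a power of $\eta^{-1}$ while sampling $\mathbb{E}|S_0|^2$ at a shifted spectral parameter where it remains bounded by $\pi C_V/\eta$. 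The remainder is handled by bootstrap: plugging the already-derived bound on $\ee|S_s - S_0|^2$ back into a Gronwall-type inequality for $\phi(t) := \ee|S_t - S_0|^2$ closes the loop. I expect the successive iterates of this argument to reproduce the powers $(N\eta)^{-1}$ and $(N\eta)^{-3}$ in the stated bound with the prefactor $N^{-c/2}$ coming from $\sqrt{t/N^2}$, and the bulk of the technical labor to lie in tracking constants through the iteration.
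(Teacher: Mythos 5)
Your derivation of the SDE for $S_t$ and your treatment of the martingale part are essentially the paper's: the Hilbert--Schmidt bound $\tr|R_s^2|^2\le N\im S_s/\eta^3$, BDG, and the Wegner estimate $\ee\im S_s\le\pi C_V$ give exactly $\ee|M_t|\lesssim\sqrt{t/(N^2\eta^3)}$, which is what the paper proves in its Theorem~\ref{thm:qvboundstr}. The gap is in the drift.

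Your plan for the drift term $\int_0^t\tfrac12\partial_z S_s^2\,ds$ does not close. Consider already the leading piece $\tfrac{t}{2}\partial_z S_0^2$. A Cauchy integral on a circle of radius $\sim\eta$ combined with your one-point spectral-averaging bound $\ee|S_0(w)|^2\le\pi C_V/\im w$ yields at best $\ee|\partial_z S_0^2|\lesssim\eta^{-2}$; the same $\eta^{-2}$ comes out of Cauchy--Schwarz on $\ee|S_0\,\partial_z S_0|$, because $\ee|S_0|^2\lesssim\eta^{-1}$ and $\ee|\partial_z S_0|^2\lesssim\eta^{-3}$ and no one-point bound can do better. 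At the relevant scales $\eta\sim N^{-(1+\ell)}$ and $t\le N^{-(1+c)}$ this gives a drift contribution of order $tN^{2(1+\ell)}=N^{1-c+2\ell}$, which diverges as $N\to\infty$ for any small $c$ and $\ell$, whereas the theorem requires the much smaller $N^{-c/2}(N\eta)^{-3}=N^{-c/2+3\ell}$. The Gronwall bootstrap on $\ee|S_s-S_0|^2$ cannot repair this, since the leading term is already too large before any remainder enters. The missing ingredient is a genuinely \emph{two-point} estimate: the paper extends spectral averaging to an $N^{-1}$-continuity bound for the joint measure $\ee[\nu\times|\mu_{xy}|]$ (Lemma~\ref{thm:specavgext}, part~2, a Minami-type estimate), and then discretizes the double integral defining $\ee|S\,\partial_z S|$ over $N^{-1}$-boxes (Theorem~\ref{thm:driftbounds} and Corollary~\ref{thm:driftboundstr}). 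This exploits the decorrelation between nearby eigenvalues to replace your $\eta^{-2}\sim N^{2}$ by $N\log N$ at microscopic scales, which is precisely the extra factor of $N/\log N$ you need. Without a two-point input of this kind, one cannot bound $\ee|S\,\partial_z S|$ well enough for the argument to work.
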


In essence, Theorem~\ref{thm:stabthmtr} asserts that for any spectral scale much larger than $t \ll N^{-1}$ the empirical eigenvalue measure is unaffected by the flow~\eqref{eq:flowdef}. This bound is much stronger than the one obtained from the crude norm estimate
\beq{eq:crudenorm} \|R_t(z) - R_0(z)\| \le \|R_t(z) \Phi_t R_0(z)\| \approx \frac{\sqrt{t}}{(\im z)^2}
\eeq
and potential improvements on this theme obtained by running the Brownian motions $\langle \delta_y, \Phi_t \delta_x \rangle$ one after the other. The example $H_0 = 0$ shows that at least some regularity of the initial condition is needed for Theorem~\ref{thm:stabthmtr} to remain true. However, Theorem~\ref{thm:stabthmtr} can be proved for slightly more general $H_0$ under the weaker assumption that $H_t$ satisfies the Wegner and Minami~\cite{MR1385082} estimates
\[\ee \nu_t(I) \le C|I|, \qquad \ee \nu_t(I) (\nu_t(I) - N^{-1}) \le C|I|^2\]
with a constant $C < \infty$ uniform in $N$ and $t$. This is easily seen from the proof below. It is also possible to present Theorem~\ref{thm:stabthmtr} (and Theorem~\ref{thm:stabthm}) as explicit bounds for arbitrary $t > 0$, but we artificially restrict to $t \le N^{-(1+c)}$ in order to keep the right hand side simple.

The properties of the eigenfunctions of $H_t$ are encoded in the spectral measures
\[\mu_{xy} = \sum_{\lambda \in \sigma(H_t)} \psi_\lambda(x) \overline{\psi}_\lambda(y) \delta_\lambda\]
where $\{\psi_\lambda\}$ is an orthonormal basis of eigenfunctions of $H_t$ and we have eased the notational burden by keeping the dependence of $\psi_\lambda$ and $\mu_{xy}$ on $t$ implicit. Hence, the Green functions
\[G_t(x,y; z) =  \langle \delta_y, R_t(z) \delta_ x \rangle = \int \! \frac{1}{\lambda - z} \, \mu_{xy}(d\lambda)\]
at scales $\im z \approx N^{-1}$ describe the eigenfunctions of $H_t$ locally near $\re z$. The stability result analogous to Theorem~\ref{thm:stabthmtr} for $G_t(x,y;z)$ is contained in the following theorem.

\begin{theorem} \label{thm:stabthm} For every $c > 0$, there exists $C < \infty$, depending only on $c$ and $C_V$, such that
\[ \frac{1}{N}\sum_y \ee \left| G_t\left(x,y; E + i\eta\right)-  G_0\left(x,y;E + i\eta \right) \right| \le C  N^{-c/2} \left(1 + \frac{1}{N\eta} + \frac{1}{(N\eta)^3}\right)\]
for all $t \le N^{-(1 + c)}$, $E \in \rr$, and  $x \in \{1,\dots,N\}$.
\end{theorem}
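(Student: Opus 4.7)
The plan is to deduce the entrywise $L^1$ bound from an $L^2$ bound on the $x$-th column of $R_t(z)-R_0(z)$, and to control the latter by an It\^o argument that parallels the proof of Theorem~\ref{thm:stabthmtr}. Cauchy-Schwarz in $y$ followed by Jensen's inequality gives
\[\frac{1}{N}\sum_y \ee\bigl|G_t(x,y;z)-G_0(x,y;z)\bigr|\le\left(\frac{1}{N}\ee\,\bigl\|(R_t(z)-R_0(z))\delta_x\bigr\|^2\right)^{1/2},\]
so it is enough to prove the stronger square-summed estimate
\[\frac{1}{N}\ee\,\bigl\|(R_t(z)-R_0(z))\delta_x\bigr\|^2\le CN^{-c}\left(1+\frac{1}{N\eta}+\frac{1}{(N\eta)^3}\right)^2,\]
whose square root recovers the statement. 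The quantity on the left is a genuine spectral object, since $\|R_t(z)\delta_x\|^2=\eta^{-1}\im G_t(x,x;z)$, and analogously for the cross and initial terms.

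Next, I would apply It\^o's formula to $\mathcal{D}_s(x;z)\vcentcolon=\tfrac{1}{N}\langle\delta_x,(R_s-R_0)^{*}(R_s-R_0)\delta_x\rangle$ using the matrix SDE $dR_s=-R_s\,d\Phi_s\,R_s+R_s\,d\Phi_s\,R_s\,d\Phi_s\,R_s$ derived in Section~\ref{sec:flowsec}. The martingale part has zero expectation, while the remaining drift together with the quadratic-variation bracket generated by expanding $(R_s-R_0)^{*}(R_s-R_0)$ produce terms proportional to $\tfrac{1}{N}ds$ of the schematic form $(R_0^{*}AR_0)_{xx}$, $(R_0^{*}BR_s)_{xx}$, and $\tr(C)\,(D)_{xx}$, where $A,B,C,D$ are built from $R_s$ and $R_s^{*}$ through the GOE contraction $\ee[\Phi_s A\Phi_s\,|\,\mathcal{F}_s]=\tfrac{ds}{N}\bigl(A^T+\tr(A)I\bigr)$. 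Rewriting with $(R_sR_s^{*})_{yy}=\eta^{-1}\im G_s(y,y;z)$ and $\tr(R_s^{*}R_s)=N\eta^{-1}\im S_s(z)$, every resulting summand is a polynomial in imaginary parts of diagonal Green function entries and of the Stieltjes transform, multiplied by inverse powers of $\eta$ that count the number of resolvents appearing in the contraction.

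The apparent singularities in $\eta$ are tamed by spectral averaging over the potential $V$. Conditioning on $\{V(u)\}_{u\ne x}$ and on the Brownian paths, the rank-one perturbation formula makes $V(x)\mapsto G_s(x,x;z)$ a M\"obius transformation of the real variable $V(x)$, and the Lipschitz hypothesis~\eqref{eq:potassumption} therefore furnishes the Wegner-type bounds
\[\ee\,\im G_s(x,x;z)\le \pi C_V, \qquad \ee\,\im S_s(z)\le \pi C_V,\]
together with a companion Minami-type second-moment estimate for $\ee\bigl[\im G_s(x,x;z)\,\im G_s(y,y;z)\bigr]$ at distinct sites. Feeding these into the differential inequality for $\ee\,\mathcal{D}_s(x;z)$ and integrating over $s\in[0,t]$ with $t\le N^{-(1+c)}$ delivers the factor $N^{-c}$; the leftover inverse powers of $\eta$ collect into precisely the square envelope $(1+(N\eta)^{-1}+(N\eta)^{-3})^2$.

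The main obstacle is the quadratic-variation contribution of the martingale $R_s\,d\Phi_s\,R_s$. In passing from the scalar trace of Theorem~\ref{thm:stabthmtr} to the $x$-th column, the most singular contraction doubles its number of resolvent factors, so a term that would cost $(N\eta)^{-3}$ for $S_t-S_0$ here threatens to cost $(N\eta)^{-6}$ --- which is exactly why the $L^2$ target must carry the square of the envelope. Keeping those powers of $\eta^{-1}$ under control requires averaging $V$ consistently not only at $x$ but also at each new site $y$ released by the contractions, and using the Minami-style bound to convert the mixed product $\im G_s(x,x;z)\,\im G_s(y,y;z)$ into an object that integrates against $dV(x)\,dV(y)$ with at most one residual factor of $\eta^{-1}$. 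Once this book-keeping is in place, the integration in $s$ closes the differential inequality and the Cauchy-Schwarz reduction above completes the proof.
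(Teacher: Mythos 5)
Your Cauchy--Schwarz reduction to the $L^2$ quantity $\frac{1}{N}\ee\|(R_t-R_0)\delta_x\|^2$ is correct, and this is a genuinely different route than the paper's: instead of integrating the SDE for $G_s(x,y;z)$ and bounding $\frac{1}{N}\sum_y \ee|\cdot|$ directly, you apply It\^o to the quadratic form $\mathcal D_s$. However, there is a real gap in the treatment of the drift. The contribution to $d\mathcal D_s$ from the drift of $R_s$ is
\[\frac{2}{N}\,\re\Bigl\langle\delta_x,\,(R_s-R_0)^*\Bigl(S_s R_s^2+\tfrac{1}{N}R_s^3\Bigr)\delta_x\Bigr\rangle\,ds,\]
and the $S_s R_s^2$ piece is \emph{not} proportional to $1/N$ in any useful sense --- it is a bilinear product of the unknown $D_s=R_s-R_0$ with $S_s\partial_z R_s$, which is $O(S_s)$. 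You characterize the drift as producing terms $(R_0^*AR_0)_{xx}$, $(R_0^*BR_s)_{xx}$, $\tr(C)(D)_{xx}$ "proportional to $\frac{1}{N}ds$", but this glosses over the bilinearity. If you try to close a differential inequality via Young's inequality $2|ab|\le\lambda|a|^2+\lambda^{-1}|b|^2$, you need an $L^2$ bound on the drift, $\frac{1}{N}\sum_y\ee|S_s\partial_z G_s(x,y)|^2$, which costs roughly an extra $(\im z)^{-1}$ relative to the $L^1$ bound of Theorem~\ref{thm:driftbounds} and blows the budget for $\eta\sim N^{-1}$; alternatively, splitting $|D_s|\le|R_s|+|R_0|$ and applying Cauchy--Schwarz in $y$ produces cross-time quantities like $\ee\bigl[|S_s|\sqrt{\im G_0(x,x;z)\,\im G_s(x,x;z)}\bigr]$ that are not covered by Lemma~\ref{thm:specavgext} or Theorem~\ref{thm:qvboundlemma} and require new mixed-time spectral-averaging bounds. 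None of this is spelled out, and it is precisely the hard part.

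By contrast, the paper never forms the bilinear product at all. It integrates the SDE of Theorem~\ref{thm:resflow} linearly in $L^1$: the drift is bounded \emph{pointwise} in $y$ by Theorem~\ref{thm:driftbounds} (where the $S_s\partial_z G_s$ term is handled via the $\nu\times|\mu_{xy}|$ estimate of Lemma~\ref{thm:specavgext}), and the martingale is bounded after averaging in $y$ via Burkholder--Davis--Gundy and Theorem~\ref{thm:qvboundlemma}. The quadratic-variation part of your sketch is essentially that argument and is sound; the drift part is where you would need substantial additional work, and the $L^2$ detour buys nothing --- it merely squares the target and creates the bilinear obstruction that the direct $L^1$ integration avoids. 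If you want to pursue the $L^2$ route, you should first establish the mixed-time Wegner/Minami estimates and verify that the $|D_s|\le|R_s|+|R_0|$ split closes within the envelope $\bigl(1+(N\eta)^{-1}+(N\eta)^{-3}\bigr)^2$, absorbing the resulting $\log N$ factors into the $N^{-c/2}$ slack as the paper does.
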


The proofs of Theorems~\ref{thm:stabthmtr} and~\ref{thm:stabthm}, which may be found in Section~\ref{sec:flowsec}, are based on the fact that $R_t(z)$ satisfies the stochastic differential equation
\beq{eq:approxflow}dR_t(z) = \left(S_t(z) \parder{}{z} R_t(z) + \frac{1}{2N} \parder{^2}{z^2} R_t(z) \right) \, dt + d\tilde{M_t},
\eeq
where $\tilde{M}_t$ is a matrix-valued martingale whose entries can be given explicitly in terms of $R_t(z)$. The details of this equation, as well as its derivation, are contained in Section~\ref{sec:flowsec}. The proofs rely on bounds controlling the regularized singularities of the terms in the right-hand side of~\eqref{eq:approxflow} down to the scale $ \eta = \mathcal{O}(N^{-1}) $. In the same vein as remarked below~\eqref{eq:crudenorm}, we stress that Theorems~\ref{thm:stabthmtr} and~\ref{thm:stabthm} cannot be obtained from the trivial estimate $ \|  \frac{\partial^n}{\partial z^n} R_t(z) \| \leq (\im z)^{-(n+1)} $. Instead, we use the smoothing of singularities through the external random potential $ V $ as detailed in Section~\ref{sec:smooth}. 
Since we are interested in times scales $ t \ll N^{-1} $, the potential is in fact the only possible source for this smoothing.

Let us conclude by noting that the analogue of~\eqref{eq:flowdef} for perturbations drawn from the GUE,
\[\langle \delta_y, \tilde{\Phi}_t \delta_x \rangle = \sqrt{\frac{1}{N}}\begin{cases} \frac{1}{\sqrt{2}} (B_{xy}(t) + i\tilde{B}_{xy}(t)) & \mbox{ if } x < y\\ B_{xx}(t) & \mbox{ if } x=y \end{cases}\]
with $\tilde{B}_{xy}$ independent of $B_{xy}$, has also been widely studied. The analysis of this model is usually simpler because the additional symmetry enables explicit integration formulas (see~\cite{0036-0279-66-3-R02} and references therein for a summary) and all the results and methods of this paper require only minor modifications to treat also the GUE flow.

\section{Smoothing Effects of the Potential} \label{sec:smooth}
Throughout this section we will let $H$ be a general $N \times N$ random matrix of the form
\[H = T + V,\]
where $V$ is a potential satisfying the assumption~\eqref{eq:potassumption} and $T$ is some Hermitian random matrix independent of $V$, which should be thought of as $T + \Phi_t$ from~\eqref{eq:flowdef}. Our goal is to use the smoothing effects of $V$ on the spectral measures $\mu_{xy}$ of $\delta_x$ and $\delta_y$ for $H$ and the empirical eigenvalue measure
\[\nu(f) = \frac{1}{N}\sum_{\lambda \in \sigma(H)} f(\lambda)\]
to control the resolvent flow~\eqref{eq:approxflow}. We start by recalling two staples of the theory of random Schr\"{o}dinger operators, the spectral averaging principle~\cite{MR841099} and the closely related Wegner estimate, whose proofs may be found in~\cite{MR3364516}. The former asserts that for any Borel set $I \subset \rr$ and any $x \in \{1,\dots,N\}$ we have
\beq{eq:specavg}\eesub{x}\left[ \mu_{x}(I) \right] \le C_V|I|,
\eeq
where $\eesub{x}$ denotes the conditional expectation with respect to the random variables $\{V(k): k \neq x\}$. By averaging this bound over all $x \in \{1,\dots,N\}$, we immediately obtain the latter result, namely that
\beq{eq:wegnerest} \ee \nu(I)  \le C_V |I|
\eeq
for all Borel sets $I \subset \rr$. The following lemma is a simple extension of these results based on the proof of Minami's estimate by Combes, Germinet, and Klein~\cite{MR2505733}. We write $|\mu|$ for the total variation measure of $\mu$.
\begin{lemma} \label{thm:specavgext} There exists $C < \infty$, depending only on $C_V$, such that
\begin{enumerate}
\item $\ee |\mu_{xy}|(I) \le C|I|$ and
\item $\ee \left[ \nu(I) |\mu_{xy}|(J)\right]  \le C \left(|I| + \frac{2}{N}\right) |J|$
\end{enumerate}
for all Borel sets $I,J \subset \rr$ and $x,y \in \{1,\dots,N\}$.
\end{lemma}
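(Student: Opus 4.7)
My plan is to reduce both claims to the one-site spectral averaging estimate~\eqref{eq:specavg} and the Wegner bound~\eqref{eq:wegnerest}, using Cauchy--Schwarz to pass from the off-diagonal spectral measure to the two diagonal ones. The elementary inequality $2|\psi_\lambda(x)\psi_\lambda(y)| \le |\psi_\lambda(x)|^2 + |\psi_\lambda(y)|^2$, summed over $\lambda \in \sigma(H) \cap I$, gives
\[|\mu_{xy}|(I) \le \tfrac{1}{2}\left(\mu_x(I) + \mu_y(I)\right),\]
from which part~(1) follows immediately by taking the expectation and applying~\eqref{eq:specavg} to each term on the right.

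Part~(2) is the substantive claim, since $\nu(I)$ and $|\mu_{xy}|(J)$ depend on all the potentials simultaneously. The same Cauchy--Schwarz bound reduces matters to estimating $\ee[\nu(I)\mu_x(J)]$ for each fixed $x$ (and symmetrically for $y$). To decouple $\nu(I)$ from the site $x$, I would use the rank-one splitting $H = H^{(x)} + V(x) \ketbra{\delta_x}{\delta_x}$, where $H^{(x)}$ depends only on $\{V(k) : k \neq x\}$ and is therefore independent of $V(x)$. Cauchy's interlacing theorem for rank-one self-adjoint perturbations then gives
\[\nu(I) \le \nu^{(x)}(I) + \tfrac{1}{N},\]
where $\nu^{(x)}$ is the analogous counting measure for $H^{(x)}$. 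The $1/N$ error contributes only $N^{-1}\ee\mu_x(J) \le C_V|J|/N$, which is absorbed into the right-hand side of the desired bound.

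For the main term $\ee[\nu^{(x)}(I)\mu_x(J)]$ I would condition on $\{V(k) : k \neq x\}$: this freezes $\nu^{(x)}(I)$ and allows it to be pulled outside, while~\eqref{eq:specavg} bounds the inner conditional expectation of $\mu_x(J)$ by $C_V|J|$. What remains is $\ee\nu^{(x)}(I)$, which I would control by decomposing $\nu^{(x)} = N^{-1}\sum_y \mu_y^{(x)}$ into the diagonal spectral measures of $H^{(x)}$: for every $y \neq x$, spectral averaging over $V(y)$ yields $\ee\mu_y^{(x)}(I) \le C_V|I|$, while the single exceptional site $y = x$ is handled by the trivial bound $\mu_x^{(x)}(I) \le 1$. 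This produces $\ee\nu^{(x)}(I) \le C_V|I| + 1/N$, and assembling the pieces gives the claimed $C(|I| + 2/N)|J|$ bound.

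The only non-probabilistic ingredient is the rank-one interlacing estimate, which requires no assumption on the distribution of $V$. The main conceptual point is that two applications of the single-site spectral averaging estimate suffice here: one for $V(x)$ and one for a second site $V(y)$. The more intricate rank-two argument required for Minami's estimate proper is avoided because one of the two factors in the product is the diagonal measure $\mu_x$ rather than the full counting measure $\nu$, so the decoupling needs to be done at only one site.
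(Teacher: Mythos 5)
Your argument is correct, and both parts of it deviate somewhat from the paper's proof. For part~(1) you use the pointwise inequality $2|\psi_\lambda(x)\psi_\lambda(y)| \le |\psi_\lambda(x)|^2 + |\psi_\lambda(y)|^2$ and bound each diagonal measure by an unconditional application of~\eqref{eq:specavg}, whereas the paper applies Cauchy--Schwarz to the conditional expectation $\eesub{xy}$ and thereby records the \emph{conditional} estimate $\eesub{xy}|\mu_{xy}|(I) \le C|I|$, which it then reuses directly in part~(2). The more substantial divergence is in part~(2): the paper observes that $\nu(I)$ changes by at most $2/N$ as $V(x)$ \emph{and} $V(y)$ vary (a rank-two interlacing statement), conditions on the remaining potentials, and applies the conditional part-(1) bound together with the Wegner estimate~\eqref{eq:wegnerest}. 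You instead decouple only the single site $x$ via the rank-one splitting $H = H^{(x)} + V(x)\ketbra{\delta_x}{\delta_x}$ and the bound $\nu(I) \le \nu^{(x)}(I) + 1/N$, pull $\nu^{(x)}(I)$ out of $\eesub{x}$, and then need an extra step, namely $\ee\nu^{(x)}(I) \le C_V|I| + 1/N$, which you obtain by decomposing $\nu^{(x)}$ into diagonal spectral measures and averaging at each site $y \neq x$ while bounding $\mu_x^{(x)}(I) \le 1$ trivially. Both routes land on the same constant $C(|I| + 2/N)|J|$; yours trades the two-site interlacing observation for a slightly longer but arguably more elementary chain of one-site applications of spectral averaging, while the paper's version is more economical because it reuses part~(1) as a conditional lemma. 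One small bookkeeping point you should make explicit: the spectral averaging estimate is stated in the paper for $H$, so when you apply it to $H^{(x)}$ at a site $y \neq x$ you should note that $H^{(x)}$ has the same form $T' + V'$ with the same conditional Lipschitz hypothesis at $y$, so the estimate transfers unchanged.
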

\begin{proof}
Notice that
\[|\mu_{xy}|(I) = \sum_{\lambda \in \sigma(H) \cap I} \left |\psi_\lambda(x) \psi_\lambda(y)\right|,\]
so the Cauchy-Schwarz inequality implies
\[|\mu_{xy}|(I)\le \sqrt{ \mu_x(I) \mu_y(I)}.\]
Applying the Cauchy-Schwarz inequality to the expectation $\eesub{xy}$ conditioned on $\{V(k): k\neq x,y\}$ and using~\eqref{eq:specavg} then yield
\beq{eq:offdiagspecavg}\eesub{xy} |\mu_{xy}|(I) \le \eesub{xy}  \sqrt{ \mu_x(I) \mu_y(I)} \le \sqrt{\eesub{xy} \mu_x(I) \eesub{xy} \mu_y(I)} \le C|I|,
\eeq
which implies the first assertion of the Lemma.

For the second claim, notice that for fixed values $\{V(k): k\neq x,y\}$ of the potential away from $x$ and $y$, the number of eigenvalues in $I$ can change by at most two as $V(x)$ and $V(y)$ vary in $\rr$. Hence
\begin{align*}\ee \left[ \nu(I) |\mu_{xy}|(J)\right] &\le \ee \left[ \left( \nu(I) + \frac{2}{N} \right) \eesub{xy} |\mu_{xy}|(J)\right] \\
&\le C|J|  \ee \left[ \nu(I) + \frac{2}{N} \right] \le C \left(|I| + \frac{2}{N}\right) |J|,
\end{align*}
by~\eqref{eq:wegnerest} and~\eqref{eq:offdiagspecavg}.
\end{proof}

Intuitively, Lemma~\ref{thm:specavgext} asserts that the joint measure $\ee\left[\nu \times |\mu_{xy}|\right]$ is continuous down to scales of order $N^{-1}$, which clearly has consequences for the integrals of test functions in terms of their variations on scales of order $N^{-1}$. The next results are a quantitative manifestation of this idea for the Stieltjes transforms
\[G(x,y;z) = \int \! \frac{1}{\lambda - z} \, \mu_{xy}(d\lambda)\]
and
\[S(z) = \int \! \frac{1}{\lambda - z} \, \nu(d\lambda),\]
which occur naturally in our study of the resolvent flow. In particular, the following theorem gives bounds for the drift.

\begin{theorem}\label{thm:driftbounds} There exists $C < \infty$, depending only on $C_V$, such that
\[\ee \left|\frac{1}{2N} \parder{^2}{z^2} G(x,y; z) \right| \le \frac{C}{N (\im z)^2} \]
and
\[\ee \left| S(z) \parder{}{z} G(x,y; z) \right| \le C N \left( \log N + \frac{1}{N \, \im z} \right) \left(1 + \frac{1}{(N \, \im z)^{2}} \right) +  \frac{C}{\im z}\]
for all $x,y \in \{1,\dots,N\}$ and $z \in \cp$.
\end{theorem}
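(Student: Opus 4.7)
The plan is to derive both bounds by combining layer-cake representations of the singular integrands with the spectral averaging estimates of Lemma~\ref{thm:specavgext}. Write $\eta = \im z$ throughout.

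For the first bound I would differentiate the spectral representation of $G(x,y;z)$ twice in $z$ and bound in absolute value to obtain
\[|\partial_z^2 G(x,y;z)| \le 2\int |\lambda-z|^{-3}\,d|\mu_{xy}|(\lambda).\]
The layer-cake identity rewrites the right-hand side as $\int_0^{\eta^{-3}} |\mu_{xy}|(C_s)\,ds$ with $C_s = \{\lambda : |\lambda-z|<s^{-1/3}\}$ and $|C_s| \le 2s^{-1/3}$. Taking expectation, applying part~(1) of Lemma~\ref{thm:specavgext} to get $\ee|\mu_{xy}|(C_s) \le 2Cs^{-1/3}$, and evaluating the resulting $s$-integral yields a bound of order $1/\eta^2$; dividing by $2N$ gives the first claim.

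For the second bound the strategy is to factor
\[|S(z)\,\partial_z G(x,y;z)| \le \iint \frac{d\nu(\lambda_1)\,d|\mu_{xy}|(\lambda_2)}{|\lambda_1-z|\,|\lambda_2-z|^2}\]
and apply the layer cake to each factor, reducing the expectation to
\[\int_0^{1/\eta}\!\int_0^{1/\eta^2}\ee\bigl[\nu(A_s)\,|\mu_{xy}|(B_t)\bigr]\,dt\,ds,\]
where $A_s = \{\lambda : |\lambda-z|<1/s\}$ and $B_t = \{\lambda : |\lambda-z|^2<1/t\}$ are centered intervals of lengths $\le 2/s$ and $\le 2/\sqrt{t}$. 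The crucial step is to bound the integrand using whichever of the estimates in Lemma~\ref{thm:specavgext} is sharper: for $s$ below a fixed threshold $s_* = O(1)$, where $|A_s| \gtrsim 1$ makes the joint bound wasteful, use only the marginal estimate $\ee|\mu_{xy}|(B_t) \le C|B_t|$; for $s \ge s_*$, invoke $\ee[\nu(A_s)|\mu_{xy}|(B_t)] \le C(|A_s|+2/N)|B_t|$ from part~(2). The $t$-integration contributes $O(1/\eta)$ in every regime, the $s$-integration over $[0,s_*]$ contributes $O(1)$, and the $s$-integration over $[s_*,1/\eta]$ produces $\log(1/\eta)$ from the $1/s$-term in $|A_s|$ together with $1/(N\eta)$ from the $1/N$-correction. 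An elementary case analysis comparing $\eta$ to $1/N$ shows that $\log(1/\eta)/\eta \le C N\log N + C\log N/(N\eta^2)$, so the three contributions $1/\eta$, $\log(1/\eta)/\eta$, and $1/(N\eta^2)$ all fit within the stated right-hand side.

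I expect the main obstacle to be the marginal logarithmic non-integrability of $1/|\lambda_1 - z|$ in $\lambda_1$. A naive product of the two individual Wegner bounds would require an independence statement for $\nu$ and $|\mu_{xy}|$ that does not hold, because both measures are supported on the eigenvalues of $H$, so the Minami-type joint bound of Lemma~\ref{thm:specavgext}(2) is essential. The purpose of the threshold $s_*$ is to delineate the regime in which this joint bound genuinely improves over the marginal one, and the $\log N$ factor in the theorem is the residue of integrating the resulting $1/s$-term up to the cutoff $s = 1/\eta$.
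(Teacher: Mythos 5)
Your argument is correct and takes essentially the same route as the paper: both bounds hinge on the spectral-averaging estimates of Lemma~\ref{thm:specavgext}, with a near-far decomposition of the $1/|\lambda_1-z|$ factor (your threshold $s_*$ playing the role of the paper's indicator $1_{|\lambda_1-\re z|\le 1}$) and the joint Minami-type bound of part~(2) controlling the correlation between $\nu$ and $|\mu_{xy}|$. The only technical difference is that you implement the integration via a continuous layer-cake (co-area) formula while the paper discretizes onto a $1/N$-lattice and bounds the resulting Riemann sums; the latter yields $\log N$ directly from the finite range $|\alpha|\le N$, whereas your route first produces $\log(1/\eta)$ and then needs the extra elementary observation that $u\log(1/u)=O(1)$ for $u=N\eta\in(0,1)$ to recover the stated form.
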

\begin{proof} The first point of Lemma~\ref{thm:specavgext} implies that
\begin{align} \label{eq:driftbound1} \ee \left| \frac{1}{2N} \parder{^2}{z^2} G(x,y; z) \right| &\le \frac{1}{N \, \im z} \ee \int \! \frac{1}{|\lambda-z|^2} \, |\mu_{x,y}|(d\lambda)\nonumber\\
&\le  \frac{C}{N\, \im z}\int \! \frac{1}{|\lambda-z|^2} \, d\lambda\nonumber\\
&\le \frac{C}{N (\im z)^2},
\end{align}
which is the first assertion of the theorem.

Next, let us introduce
\[f(\lambda) =\frac{1_{|\lambda - \re z| \le 1}}{|\lambda - z|}, \quad \tilde{f}(\lambda) =\frac{1_{|\lambda - \re z| > 1}}{|\lambda - z|},  \quad g(\lambda) = \frac{1}{|\lambda - z|^2}\]
so that
\[\ee \left| S(z)\frac{\partial}{\partial z} G(x,y;z) \right| \le \ee \iint \! \left(f(\lambda_1) + \tilde{f}(\lambda_1)\right) g(\lambda_2) \, \nu(d\lambda_1) |\mu_{xy}|(d\lambda_2).\]
Setting $I_\alpha = \re z +  [\alpha/N, (\alpha + 1)/N)$,
\begin{align*}&\ee \iint \! f(\lambda_1) g(\lambda_2) \, \nu(d\lambda_1) |\mu_{xy}|(d\lambda_2)\\
  & \le \sum_{\alpha, \beta \in \zz}  \left(\sup_{\lambda \in I_\alpha} f(\lambda) \right) \left(\sup_{\lambda \in I_\beta} g(\lambda) \right) \ee \left[ \nu(I_\alpha) |\mu_{xy}|(I_\beta) \right] \\
& \le \frac{C}{N^2}\sum_{\alpha, \beta \in \zz}  \left(\sup_{\lambda \in I_\alpha} f(\lambda) \right) \left(\sup_{\lambda \in I_\beta} g(\lambda) \right),
\end{align*}
where we used the second part of Lemma~\ref{thm:specavgext} to bound the expectations. Since $f$ and $g$ are symmetric about $\re z$ and monotone decreasing in $|\lambda - \re z|$, the previous chain of inequalities continues
\begin{align*}
&\le  \frac{4 C}{N^2} \sum_{\alpha, \beta \in \nn} f\left(\re z + \frac{\alpha}{N} \right) g\left(\re z + \frac{\beta}{N} \right) \\
&=  C N \sum_{\alpha = 0}^{N} \frac{1}{\sqrt{\alpha^2 + (N \, \im z)^2 }} \sum_{\beta \in \nn} \frac{1}{\beta^2 + (N \, \im z)^2 } \\
&\le C N \left( \log N + \frac{1}{N \, \im z} \right) \left(1 + \frac{1}{(N \, \im z)^{2}} \right).
\end{align*}
Finally, because $|\tilde{f}| \le 1$, the remaining summands satisfy
\[ \ee \iint \! \tilde{f}(\lambda_1) g(\lambda_2) \, \nu(d\lambda_1) |\mu_{xy}|(d\lambda_2)  \le \ee\int \!   \frac{1}{|\lambda - z|^2} \, |\mu_{xy}|(d\lambda) \le \frac{C}{\im z},\]
arguing as in~\eqref{eq:driftbound1}.
\end{proof}
Evaluating the trace defining $S(z)$ in the site basis,
\[S(z) = \frac{1}{N} \sum_y G(y,y;z),\]
we may average the bounds furnished by Theorem~\ref{thm:driftbounds} to obtain the following corollary, which gives the corresponding bounds for the drift of diffusion of the trace of the resolvent.
\begin{corollary} \label{thm:driftboundstr}There exists $C < \infty$, depending only on $C_V$, such that
\[\ee \left|\frac{1}{2N} \parder{^2}{z^2} S(z) \right| \le \frac{C}{N (\im z)^2} \]
and
\[\ee \left| S(z) \parder{}{z} S(z) \right| \le C N \left( \log N + \frac{1}{N \, \im z} \right) \left(1 + \frac{1}{(N \, \im z)^{2}} \right) +  \frac{C}{\im z}\]
for all $z \in \cp$.
\end{corollary}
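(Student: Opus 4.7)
The plan is to reduce the corollary to Theorem~\ref{thm:driftbounds} by expressing $S(z)$ as the average of diagonal Green function entries and invoking linearity of differentiation together with the triangle inequality under the expectation. Concretely, starting from $S(z) = N^{-1} \sum_y G(y,y;z)$, one has
\[
\parder{^2}{z^2} S(z) = \frac{1}{N} \sum_y \parder{^2}{z^2} G(y,y;z), \qquad \parder{}{z} S(z) = \frac{1}{N} \sum_y \parder{}{z} G(y,y;z),
\]
so the drift and diffusion coefficients for $S$ are themselves uniform averages of those for the diagonal Green functions.

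For the first bound, I would apply the triangle inequality and linearity of expectation to get
\[
\ee \left| \frac{1}{2N} \parder{^2}{z^2} S(z) \right| \le \frac{1}{N} \sum_y \ee \left| \frac{1}{2N} \parder{^2}{z^2} G(y,y;z) \right|,
\]
and then the first estimate of Theorem~\ref{thm:driftbounds} with $x=y$ bounds each summand by $C/(N (\im z)^2)$; averaging over $y$ preserves this bound and yields the first assertion.

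For the second bound, I would first use $|S(z) \, \partial_z S(z)| \le N^{-1} \sum_y |S(z) \, \partial_z G(y,y;z)|$ pointwise (which is valid since $S$ does not depend on $y$), so that
\[
\ee \left| S(z) \parder{}{z} S(z) \right| \le \frac{1}{N} \sum_y \ee \left| S(z) \parder{}{z} G(y,y;z) \right|,
\]
and then apply the second bound of Theorem~\ref{thm:driftbounds} to each term individually. Since that estimate is uniform in $x,y$, averaging over $y$ returns exactly the same expression, which is the claimed inequality.

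There is no real obstacle here beyond being careful that Theorem~\ref{thm:driftbounds} allows $x=y$ (which it does, as its statement is uniform over all pairs $x,y \in \{1,\dots,N\}$) and that the pointwise factoring $S(z) \partial_z G(y,y;z)$ is admissible before taking expectations. The only mild subtlety is that one cannot move $S(z)$ outside the sum and use Cauchy--Schwarz to decouple it from $\partial_z S(z)$, because the correlation between $\nu$ and $\mu_{yy}$ is precisely what Theorem~\ref{thm:driftbounds} (and, underneath it, the joint Wegner-type bound in Lemma~\ref{thm:specavgext}) controls; keeping the product under one expectation per $y$ and invoking the already-proven theorem is therefore the natural route.
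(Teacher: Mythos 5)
Your proposal is correct and coincides with the paper's own route: the text preceding the corollary says precisely that one evaluates the trace in the site basis as $S(z)=N^{-1}\sum_y G(y,y;z)$ and averages the bounds of Theorem~\ref{thm:driftbounds} over the diagonal entries. Your care in keeping $S(z)\,\partial_z G(y,y;z)$ together under a single expectation per $y$ (rather than decoupling) is exactly the right observation, since that is what Lemma~\ref{thm:specavgext} controls.
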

We conclude this section with a bound in the same spirit as the previous results for a term which does not explicitly occur in the resolvent flow, but which will nevertheless prove useful in controlling the diffusion of \eqref{eq:approxflow}.
\begin{theorem}\label{thm:qvboundlemma}There exists $C < \infty$, depending only on $C_V$, such that
\[\ee \left[ \im G(x,x;z) \, \im S(z) \right]  \le C\left(1 + \frac{1}{N \, \im z} \right)^2\]
for all $x \in \{1,\dots,N\}$ and $z \in \cp$.
\end{theorem}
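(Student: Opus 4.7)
The plan is to mimic the discretization argument used in the proof of Theorem~\ref{thm:driftbounds}, replacing the kernels $1/|\lambda - z|^2$ coming from $\partial_z$ by the kernel $h(\lambda) = \im z/|\lambda - z|^2$ produced by taking imaginary parts. The key observation enabling this is that $\mu_x = \mu_{xx}$ is a positive spectral measure and therefore coincides with its own total variation $|\mu_{xx}|$, so that Lemma~\ref{thm:specavgext}(ii) applies directly to $\ee[\nu(I)\mu_x(J)]$ without having to pass through the Cauchy--Schwarz step that was needed in~\eqref{eq:offdiagspecavg}.

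First I would write
\[\ee\left[\im G(x,x;z)\,\im S(z)\right] = \ee \iint h(\lambda_1) h(\lambda_2)\,\mu_x(d\lambda_1)\,\nu(d\lambda_2)\]
and partition $\rr = \bigsqcup_\alpha I_\alpha$ into intervals $I_\alpha = \re z + [\alpha/N,(\alpha+1)/N)$. Bounding each factor $h$ by its supremum on the rectangle $I_\alpha \times I_\beta$ and invoking Lemma~\ref{thm:specavgext}(ii) gives
\[\ee[\mu_x(I_\alpha)\nu(I_\beta)] \le C\bigl(|I_\beta|+2/N\bigr)|I_\alpha| \le 3C/N^2,\]
which reduces the task to bounding $\bigl(\sum_{\alpha \in \zz} \sup_{\lambda \in I_\alpha} h(\lambda)\bigr)^2$.

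Next, since $h$ is symmetric about $\re z$ and monotone decreasing in $|\lambda - \re z|$, I would estimate
\[\sum_{\alpha \in \zz} \sup_{\lambda \in I_\alpha} h(\lambda) \le 2 \sum_{\alpha \geq 0} \frac{N^2\,\im z}{\alpha^2 + (N\,\im z)^2},\]
isolating the $\alpha = 0$ term, which contributes $1/\im z$, and comparing the tail to the integral $\int_0^\infty N^2\im z/(x^2 + (N\im z)^2)\,dx = N\pi/2$. The outcome is $\sum_\alpha \sup_{\lambda \in I_\alpha} h(\lambda) \le CN(1 + 1/(N\,\im z))$; squaring and multiplying by $3C/N^2$ yields exactly the claimed bound $C(1 + 1/(N\,\im z))^2$.

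The main obstacle is essentially bookkeeping rather than conceptual: the argument is a direct transcription of the second half of the proof of Theorem~\ref{thm:driftbounds}, with the one simplification that the positivity of $\mu_x$ allows a one-step application of Lemma~\ref{thm:specavgext}(ii). Because of this, I expect the proof to be quite short.
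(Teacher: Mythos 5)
Your proposal is correct and follows essentially the same route as the paper: the paper's proof also writes both factors as integrals against the Poisson kernel $P_z$ (your $h$), uses the partition $I_\alpha = \re z + [\alpha/N,(\alpha+1)/N)$, bounds $\ee[\nu(I_\alpha)\mu_x(I_\beta)]$ by $C/N^2$ via Lemma~\ref{thm:specavgext}, and then evaluates the resulting Riemann-type double sum using the symmetry and monotonicity of $P_z$. The observation about the positivity of $\mu_x$ making the total-variation step unnecessary is consistent with what the paper implicitly does, so there is no genuine deviation.
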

\begin{proof}
The proof follows along the same lines as that of Theorem~\ref{thm:driftbounds}. Setting $I_\alpha = \re z +  [\alpha/N, (\alpha + 1)/N)$, letting
\beq{eq:poissondef} P_z(\lambda) = \im \frac{1}{\lambda - z} = \frac{\im z}{(\lambda - \re z)^2 + (\im z)^2}
\eeq
denote the rescaled Poisson kernel, and using Lemma~\ref{thm:specavgext}, we see that
\begin{align*}\ee \left[ \im G(x,x;z) \, \im S(z) \right] &=  \ee \iint \! P_z(\lambda_1) P_z(\lambda_2) \, \nu(d\lambda_1) \mu_{x}(d\lambda_2)\\
&\le \sum_{\alpha, \beta \in \zz}  \left(\sup_{\lambda \in I_\alpha} P_z(\lambda) \right) \left(\sup_{\lambda \in I_\beta} P_z(\lambda) \right) \ee \left[ \nu(I_\alpha) \mu_{x}(I_\beta) \right]\\
&\le  \frac{C}{N^2}\sum_{\alpha, \beta \in \zz}  \left(\sup_{\lambda \in I_\alpha} P_z(\lambda) \right) \left(\sup_{\lambda \in I_\beta} P_z(\lambda) \right).
\end{align*}
Since $P_z$ is symmetric about $\re z$ and monotone decreasing in $|\lambda - \re z|$, the last term is in turn bounded by
\begin{align*}
&\le  \frac{4 C}{N^2} \sum_{\alpha, \beta \in \nn} P_z\left(\re z + \frac{\alpha}{N} \right) P_z\left(\re z + \frac{\beta}{N} \right) \\
&=  C  \sum_{\alpha \in \nn} \frac{N \, \im z}{\alpha^2 + (N \, \im z)^2 } \sum_{\beta \in \nn} \frac{N \, \im z}{\beta^2 + (N \, \im z)^2 } \\
&\le C\left(1 + \frac{1}{N \, \im z} \right)^2.
\end{align*}
\end{proof}

\section{Proof of Stability} \label{sec:flowsec}
In this section, we turn to the proofs of Theorems~\ref{thm:stabthmtr} and~\ref{thm:stabthm}. We start by deriving the stochastic differential equations~\eqref{eq:approxflow} for the resolvent $R_t(z)$ in terms of the Green functions and the normalized trace. For this, we define the martingales
\[dM_t(x,y;z) =  -\frac{1}{\sqrt{N}}\sum_{u \le v} \langle \delta_y, R_t(z) P_{uv} R_t(z) \delta_x \rangle \, dB_{uv}(t),\]
where
\[P_{uv} = \frac{1}{\sqrt{1 + \delta_{uv}}} \left( \ketbra{\delta_u}{\delta_v} + \ketbra{\delta_v}{\delta_u}\right) = \sqrt{N}\parder{}{B_{uv}}H_t\]
denotes the symmetric matrix element corresponding to $\{\delta_u,\delta_v\}$.
\begin{theorem} \label{thm:resflow} The Green function satisfies
\[dG_t(x,y;z) = \left(S_t(z) \parder{}{z} G_t(x,y; z) + \frac{1}{2N} \parder{^2}{z^2} G_t(x,y; z) \right) \, dt + dM_t(x,y;z)\]
for all $x,y \in \{1,\dots,N\}$ and $z \in \cp$.
\end{theorem}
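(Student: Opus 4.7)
The plan is to regard $G_t(x,y;z) = \langle \delta_y, R_t(z)\delta_x\rangle$ as a smooth function of the family of independent standard Brownian motions $\{B_{uv}(t)\}_{u\le v}$ and invoke It\^{o}'s formula. Since $T$ and $V$ are independent of $\Phi_t$ and remain fixed along the flow, the only stochastic input is $d\Phi_t$, whose components are precisely the $B_{uv}$'s with the scalings dictated by~\eqref{eq:entrydef}. Differentiating~\eqref{eq:entrydef} shows that $\partial_{B_{uv}} H_t = \tfrac{1}{\sqrt{N}}P_{uv}$ for every $u\le v$ (the diagonal case $u=v$ picks up the correct factor of $\sqrt{2}$ built into $P_{uu}$). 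The resolvent identity $dR = -R(dH)R$ together with a second differentiation then gives
\[
\partial_{B_{uv}} R_t = -\tfrac{1}{\sqrt{N}} R_t P_{uv} R_t, \qquad \partial^2_{B_{uv}} R_t = \tfrac{2}{N} R_t P_{uv} R_t P_{uv} R_t.
\]
The It\^{o} expansion $dG_t(x,y;z) = \sum_{u\le v}\partial_{B_{uv}} G_t\, dB_{uv} + \tfrac{1}{2}\sum_{u\le v}\partial^2_{B_{uv}} G_t\, dt$ immediately identifies the stochastic integral with $dM_t(x,y;z)$, so it only remains to evaluate the quadratic-variation contribution $\tfrac{1}{N}\sum_{u\le v}\langle \delta_y, R_t P_{uv} R_t P_{uv} R_t\delta_x\rangle\, dt$.

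Writing $R_{ij}=\langle \delta_i, R_t\delta_j\rangle$ and using $P_{uv}=\ketbra{\delta_u}{\delta_v}+\ketbra{\delta_v}{\delta_u}$ for $u<v$ and $P_{uu}=\sqrt{2}\,\ketbra{\delta_u}{\delta_u}$, a direct expansion produces four summands in the off-diagonal case and a single summand with prefactor $2$ in the diagonal case. By the symmetry $R_{uv}=R_{vu}$ and the relabeling $u\leftrightarrow v$, I would combine the $u<v$ sums in pairs to recover unrestricted sums over $(u,v)\in\{1,\dots,N\}^2$, at the price of two spurious diagonal corrections of the form $\sum_u R_{uu}R_{yu}R_{ux}$; these are exactly cancelled by the $u=v$ contribution. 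The two resulting double sums factorize as
\[
\sum_{u,v} R_{vv}R_{yu}R_{ux} = N\, S_t(z)\,(R_t^2)_{yx}, \qquad \sum_{u,v} R_{uv}R_{yu}R_{vx} = (R_t^3)_{yx},
\]
so that the net quadratic-variation integrand equals $N S_t(z)(R_t^2)_{yx} + (R_t^3)_{yx}$.

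Finally, the resolvent identities $\partial_z R_t = R_t^2$ and $\partial_z^2 R_t = 2 R_t^3$ convert $(R_t^2)_{yx}$ and $(R_t^3)_{yx}$ into $\partial_z G_t(x,y;z)$ and $\tfrac{1}{2}\partial_z^2 G_t(x,y;z)$, respectively, and the overall factor of $1/N$ produces precisely the drift stated in the theorem. The only genuine subtlety is the bookkeeping of the combinatorial sum over symmetric matrix units, where the $u=v$ diagonal pieces must exactly cancel the two subtractions that arise when the constrained sums are extended to all $(u,v)$; the rescaling factor $\sqrt{2}$ in $P_{uu}$ (equivalently, the factor $1+\delta_{xy}$ in~\eqref{eq:entrydef}) is calibrated precisely so that this cancellation goes through and the trace $\tr R_t = N S_t(z)$ emerges cleanly with no residual $O(1)$ correction.
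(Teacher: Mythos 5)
Your argument is correct and follows the same route as the paper: compute the It\^{o} drift $\tfrac{1}{N}\sum_{u\le v}\langle\delta_y, R_t P_{uv}R_t P_{uv}R_t\delta_x\rangle$, symmetrize the $u<v$ sums to unrestricted sums over $(u,v)$ using $R_{uv}=R_{vu}$, observe that the $u=v$ contribution exactly supplies the two diagonal corrections, and identify the result with $N S_t(R_t^2)_{yx}+(R_t^3)_{yx}$ via $\partial_z R_t=R_t^2$, $\partial_z^2 R_t=2R_t^3$. The only cosmetic difference is the bookkeeping phrasing (you frame the diagonal piece as a cancellation of spurious subtractions, the paper frames it as the missing $u=v$ addend), but the computation is identical.
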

\begin{proof} By the resolvent equation,
\[\parder{}{B_{uv}} R_t(z) = -\frac{1}{\sqrt{N}} R_t(z) P_{uv} R_t(z),\]
so using It\^{o}'s Lemma shows that
\begin{align*} dG_t(x,y;z)  &= \frac{1}{N} \sum_{u \le v} \langle \delta_y, R_t(z) P_{uv} R_t(z) P_{uv} R_t(z)\delta_x \rangle \, dt \\
&-  \frac{1}{\sqrt{N}}\sum_{u \le v} \langle \delta_y, R_t(z) P_{uv} R_t(z) \delta_x \rangle \, dB_{uv}(t) \\
&=  \frac{1}{N} \sum_{u \le v} \langle \delta_y, R_t(z) P_{uv} R_t(z) P_{uv} R_t(z)\delta_x \rangle \, dt  + dM_t(x,y;z).
\end{align*}
We expand the drift term as
\begin{align*}& \frac{1}{N} \sum_{u < v}  \langle \delta_y, R_t(z) \delta_v \rangle \langle \delta_u, R_t(z) \delta_u \rangle \langle \delta_v, R_t(z) \delta_x \rangle + \langle \delta_y, R_t(z) \delta_u \rangle \langle \delta_v, R_t(z) \delta_v \rangle \langle \delta_u, R_t(z) \delta_x \rangle \\
& + \frac{1}{N} \sum_{u < v}  \langle \delta_y, R_t(z) \delta_v \rangle \langle \delta_u, R_t(z) \delta_v \rangle \langle \delta_u, R_t(z) \delta_x \rangle + \langle \delta_y, R_t(z) \delta_u \rangle \langle \delta_v, R_t(z) \delta_u \rangle \langle \delta_v, R_t(z) \delta_x \rangle \\
&+ \frac{2}{N} \sum_u \langle \delta_y, R_t(z) \delta_u \rangle \langle \delta_u, R_t(z) \delta_u \rangle \langle \delta_u, R_t(z) \delta_x \rangle
\end{align*}
and exploit that the second term in each sum is the same as the first term with $u$ and $v$ interchanged to rewrite these sums as 
\begin{align*}&= \frac{1}{N}  \sum_{u, v}  \langle \delta_y, R_t(z) \delta_v \rangle \langle \delta_u, R_t(z) \delta_u \rangle \langle \delta_v, R_t(z) \delta_x \rangle\\
& + \frac{1}{N}  \sum_{u, v}  \langle \delta_y, R_t(z) \delta_v \rangle \langle \delta_u, R_t(z) \delta_v \rangle \langle \delta_u, R_t(z) \delta_x \rangle.
\end{align*}
In the second sum, we use that the spectral measures $\mu_{vu}$ are real to replace $ \langle \delta_u, R_t(z) \delta_v \rangle$ with $ \langle \delta_v, R_t(z) \delta_u \rangle$, which yields
\begin{align*}
&=\langle \delta_y, R_t(z)^2 \delta_x \rangle  \frac{1}{N} \tr R_t(z)  + \frac{1}{N} \langle \delta_y, R_t(z)^3 \delta_x \rangle\\
&= S_t(z) \parder{}{z} G_t(x,y; z) + \frac{1}{2N} \parder{^2}{z^2} G_t(x,y; z).
\end{align*}
\end{proof}
We remark that the applicability of the arguments in this paper to GUE perturbations in place of GOE perturbations is not affected by the last part of the proof, which made use of the fact that the spectral measures are real in the GOE case. This is because the additional unitary symmetry ensures that the third order term involving $\langle \delta_y, R_t(z)^3 \delta_x \rangle$ vanishes completely for the GUE flow.

By averaging the evolution of $G_t(x,x;z)$ over $x \in \{1,\dots,N\}$, we obtain an equation with a diffusion given by
\[M_t(z) = \frac{1}{N} \sum_x M_t(x,x;z),\]
which is the familiar complex Burgers equation for $S_t(z)$~\cite{MR2760897}.
\begin{corollary} \label{thm:resflowtr} The normalized trace satisfies
\[dS_t(z) = \left(S_t(z) \parder{}{z} S_t(z) + \frac{1}{2N} \parder{^2}{z^2} S_t(z) \right) \, dt + dM_t(z)\]
for all $z \in \cp$.
\end{corollary}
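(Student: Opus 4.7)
The plan is to derive the evolution equation for $S_t(z)$ by specializing Theorem~\ref{thm:resflow} to the diagonal ($y = x$) and then averaging the resulting stochastic differential over $x \in \{1,\dots,N\}$. Since $S_t(z) = \frac{1}{N}\tr R_t(z) = \frac{1}{N}\sum_x G_t(x,x;z)$ and the stochastic differential is linear, averaging the left-hand side of the equation in Theorem~\ref{thm:resflow} gives exactly $dS_t(z)$.

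For the drift, I would observe that the factor $S_t(z)$ multiplying $\parder{}{z}G_t(x,x;z)$ does not depend on $x$, and neither does the coefficient $\frac{1}{2N}$ in front of $\parder{^2}{z^2} G_t(x,x;z)$. Hence both can be pulled outside the average in $x$. Because the complex derivatives $\parder{}{z}$ and $\parder{^2}{z^2}$ commute with the finite sum over $x$, the averaged drift collapses to $S_t(z)\parder{}{z}S_t(z) + \frac{1}{2N}\parder{^2}{z^2}S_t(z)$, which is the claimed form.

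For the diffusion, by linearity of the It\^o integral the finite sum $\frac{1}{N}\sum_x dM_t(x,x;z)$ is again an It\^o differential, and it equals $dM_t(z)$ by the very definition given just before the statement of the corollary. There is no genuine obstacle here: the result is a direct algebraic consequence of Theorem~\ref{thm:resflow} together with the fact that summation over $x$, differentiation in $z$, and It\^o integration are all linear operations that commute with one another when the sums are finite.
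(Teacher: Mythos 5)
Your proposal is correct and follows exactly the paper's own route: the paper likewise obtains the corollary by setting $y=x$ in Theorem~\ref{thm:resflow} and averaging over $x$, noting that this produces the complex Burgers equation with diffusion $M_t(z) = \frac{1}{N}\sum_x M_t(x,x;z)$. Nothing is missing, since the sum over $x$ is finite and all operations involved are linear.
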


We will now employ the results of Section \ref{sec:smooth} to smooth the resolvent flow of Theorem \ref{thm:resflow}. Theorem~\ref{thm:driftbounds} and Corollary~\ref{thm:driftboundstr} already accomplish this for the drift, but some further analysis based on spatial averaging is required to control the diffusion and this is the content of the next two theorems. 
\begin{theorem}\label{thm:qvbounds} There exists a constant $C < \infty$, depending only on $C_V$, such that
\[\frac{1}{N} \sum_{y} \ee |M_t(x,y; z) | \le C\sqrt{ \frac{t }{N (\im z)^2} }  \left(1 + \frac{1}{N \, \im z} \right)\]
for all $x \in \{1,\dots,N\}$, $z \in \cp$ and $t \geq 0$.
\end{theorem}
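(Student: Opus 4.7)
The plan is to reduce the first-moment bound to a second-moment bound via Cauchy--Schwarz, compute the quadratic variation using It\^{o}'s isometry, and then exploit a factorization of the resulting sum over $(u,v)$ that collapses it into a product of diagonal Green function entries. At that point the estimate of Theorem~\ref{thm:qvboundlemma} closes the argument.

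First, applying Cauchy--Schwarz both inside each expectation and to the average over $y$ gives
\[\frac{1}{N}\sum_y \ee|M_t(x,y;z)| \le \left(\frac{1}{N}\sum_y \ee|M_t(x,y;z)|^2\right)^{1/2},\]
so it suffices to bound the second moment. By It\^{o}'s isometry, applied separately to the real and imaginary parts of the complex-valued integrand against the real Brownian motions $B_{uv}$,
\[\ee|M_t(x,y;z)|^2 = \frac{1}{N}\int_0^t \sum_{u\le v} \ee\left|\langle \delta_y, R_s(z) P_{uv} R_s(z) \delta_x\rangle\right|^2 ds.\]

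Expanding $P_{uv} = (1+\delta_{uv})^{-1/2}(\ketbra{\delta_u}{\delta_v} + \ketbra{\delta_v}{\delta_u})$, each matrix element equals $(1+\delta_{uv})^{-1/2}(G_s(u,y;z)G_s(x,v;z) + G_s(v,y;z)G_s(x,u;z))$. Using $|a+b|^2 \le 2(|a|^2+|b|^2)$ and the symmetry between $u$ and $v$ to undo the ordering $u \le v$, one finds
\[\sum_{u \le v}\left|\langle \delta_y, R_s(z) P_{uv} R_s(z) \delta_x\rangle\right|^2 \le 2 \sum_{u,v} |G_s(u,y;z)|^2 |G_s(x,v;z)|^2,\]
and the right side factorizes as $2 \langle \delta_y, R_s(z) R_s(z)^* \delta_y\rangle \langle \delta_x, R_s(z) R_s(z)^* \delta_x\rangle$. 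For $z = E+i\eta$, the resolvent identity $R_s(z) R_s(z)^* = \eta^{-1} \im R_s(z)$ then yields the bound $2 \eta^{-2}\im G_s(y,y;z) \im G_s(x,x;z)$.

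Averaging in $y$ and using $\frac{1}{N}\sum_y \im G_s(y,y;z) = \im S_s(z)$ together with Theorem~\ref{thm:qvboundlemma} gives
\[\frac{1}{N}\sum_y \ee|M_t(x,y;z)|^2 \le \frac{2}{N\eta^2}\int_0^t \ee\left[\im G_s(x,x;z) \, \im S_s(z)\right] ds \le \frac{Ct}{N\eta^2}\left(1 + \frac{1}{N\eta}\right)^2,\]
and taking square roots delivers the claimed bound. The only real subtlety is the factorization step: once the ordered sum is extended to all pairs $(u,v)$ and $P_{uv}$ is split via $|a+b|^2\le 2(|a|^2+|b|^2)$, the sum separates cleanly into an $x$-part and a $y$-part, which is what allows the spatial average $\frac{1}{N}\sum_y$ to land exactly on $\im S_s$ and trigger Theorem~\ref{thm:qvboundlemma}.
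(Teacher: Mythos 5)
Your proof is correct and follows essentially the same route as the paper: compute the quadratic variation, symmetrize to remove the ordering $u\le v$, factorize the $(u,v)$-sum into a product of $\langle\delta_y, R_s R_s^*\delta_y\rangle$ and $\langle\delta_x, R_s^* R_s\delta_x\rangle$, rewrite via $R_s R_s^* = \eta^{-1}\im R_s$, average in $y$ to produce $\im S_s$, and close with Theorem~\ref{thm:qvboundlemma}. The only cosmetic difference is at the very first step, where you reduce to the second moment via Cauchy--Schwarz and It\^o's isometry while the paper invokes Burkholder--Davis--Gundy followed by Jensen; both are routine and give the same bound (up to constants), with your version being marginally more elementary.
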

\begin{proof} The quadratic variation of $M_t(x,y;z)$ satisfies 
\begin{align*} \langle M_t(x,y;z) \rangle &= \frac{1}{N} \int_0^t \! \sum_{u \le v} \left| \langle \delta_y, R_s(z) P_{uv} R_s(z) \delta_x \rangle \right|^2 \, ds \nonumber\\
&\le \frac{2}{N} \int_0^t \! \sum_{u, v} \left| \langle \delta_y, R_s(z) \delta_u \rangle \langle \delta_v, R_s(z) \delta_x \rangle \right|^2 \, ds \nonumber\\
&= \frac{2}{N} \int_0^t \! \left(\sum_{u} \left| \langle \delta_y, R_s(z) \delta_u \rangle \right|^2\right) \left(\sum_{v} \left| \langle \delta_v, R_s(z) \delta_x \rangle \right|^2\right) \, ds \nonumber\\
&= \frac{2}{N (\im z)^2} \int_0^t \! \im G_s(x,x;z) \, \im G_s(y,y;z) \, ds,
\end{align*}
where we combined the symmetrization argument of Theorem~\ref{thm:resflow} with the inequality $(a+b)^2 \le 2(a^2 + b^2)$. Hence
\begin{align*}
\frac{1}{N} \sum_y \ee |\langle  M_t(x,y;z) \rangle|  &\le \frac{2}{N (\im z)^2}  \int_0^t \! \ee \left[ \im G_s(x,x;z) \, \im S_s(z) \right]\, ds\\
&\le \frac{Ct }{N (\im z)^2}  \left(1 + \frac{1}{N \, \im z} \right)^2
\end{align*}
by Theorem~\ref{thm:qvboundlemma}. Combining the Burkholder-Davis-Gundy inequality with Jensen's inequality for $\frac{1}{N} \sum_y \ee$ shows that
\begin{align*}\frac{1}{N} \sum_{y} \ee |M_t(x,y; z) | &\le \! C\left( \frac{1}{N} \sum_y \ee  \langle M_t(x,y;z) \rangle \right)^{1/2}\\
& \le  C\sqrt{ \frac{t }{N (\im z)^2} }  \left(1 + \frac{1}{N \, \im z} \right).
\end{align*}
\end{proof}

Next, we state the corresponding result for the averaged martingale
\[M_t(z) = \frac{1}{N} \sum_x M_t(x,x;z)\]
occuring in Corollary~\ref{thm:resflowtr}.
\begin{theorem} \label{thm:qvboundstr} There exists a constant $C < \infty$, depending only on $C_V$, such that
\[\ee |M_t(z) | \le \sqrt{\frac{Ct}{N^2 (\im z)^3}}\]
for all $z \in \cp$ and $t \geq 0$.
\end{theorem}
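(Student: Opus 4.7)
The plan is to follow the same template as Theorem~\ref{thm:qvbounds}: compute the quadratic variation of $M_t(z)$, bound its expectation, and then appeal to the Burkholder-Davis-Gundy inequality. The key simplification over the pointwise martingales $M_t(x,y;z)$ is that the sum $\frac{1}{N}\sum_x M_t(x,x;z)$ turns the integrand into a trace and therefore invites a global Hilbert-Schmidt estimate, so that only a single factor of $\im S_s(z)$ (rather than the product $\im G_s(x,x;z)\,\im S_s(z)$) will appear after bounding.

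Swapping the sum over $x$ with the trace gives
\[
dM_t(z) = -\frac{1}{N\sqrt{N}} \sum_{u \le v} \tr\!\bigl(R_t(z)^2 P_{uv}\bigr) \, dB_{uv}(t).
\]
Because $R_t(z)$ is symmetric (the symmetry of $H_t$ is preserved under the flow), $\tr(R^2 P_{uv}) = 2\langle \delta_u, R^2 \delta_v\rangle$ when $u < v$ and $\sqrt{2}\langle \delta_u, R^2 \delta_u\rangle$ when $u=v$. A direct calculation using the symmetry $u \leftrightarrow v$ then shows
\[
\sum_{u \le v} |\tr(R_s(z)^2 P_{uv})|^2 \;=\; 2\sum_{u,v} |\langle \delta_u, R_s(z)^2 \delta_v\rangle|^2 \;=\; 2\,\|R_s(z)^2\|_{HS}^2 .
\]
Diagonalising $H_s$ and using $|\lambda_j(s) - z|^{-2} \le (\im z)^{-2}$ on two of the four factors yields
\[
\|R_s(z)^2\|_{HS}^2 \;=\; \sum_{j} \frac{1}{|\lambda_j(s) - z|^4} \;\le\; \frac{1}{(\im z)^2} \sum_j \frac{1}{|\lambda_j(s) - z|^2} \;=\; \frac{N\, \im S_s(z)}{(\im z)^3},
\]
so that $\langle M_t(z)\rangle \le \tfrac{2}{N^2 (\im z)^3} \int_0^t \im S_s(z)\,ds$.

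Taking expectations and invoking the Wegner estimate~\eqref{eq:wegnerest} for the empirical measure $\nu_s$ (which applies to $H_s = T + V + \Phi_s$ because $V$ remains independent of $T+\Phi_s$ by assumption) gives $\ee\, \im S_s(z) = \ee\!\int P_z(\lambda)\,\nu_s(d\lambda) \le C_V \!\int P_z(\lambda)\,d\lambda = \pi C_V$, uniformly in $s$. Hence $\ee \langle M_t(z)\rangle \le Ct/(N^2 (\im z)^3)$, and the Burkholder-Davis-Gundy inequality together with Jensen's inequality gives
\[
\ee |M_t(z)| \;\le\; C\,\bigl(\ee \langle M_t(z)\rangle\bigr)^{1/2} \;\le\; \sqrt{\frac{Ct}{N^2 (\im z)^3}} .
\]
I do not expect any real obstacle: the only conceptual step is recognising that averaging the diagonal martingales produces the trace $\tr(R^2 P_{uv})$ and thereby a Hilbert-Schmidt bound, which is the reason the prefactor improves from $1/N$ (as in Theorem~\ref{thm:qvbounds}) to $1/N^2$ and the $(1 + (N\im z)^{-1})^2$ correction disappears; the remaining work amounts to bookkeeping of constants.
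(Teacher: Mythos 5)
Your proof is correct and follows essentially the same route as the paper: express the quadratic variation of $M_t(z)$ through $\tr(R_t(z)^2 P_{uv})$, bound it by $\|R_t(z)^2\|_{HS}^2 \le (\im z)^{-2}\|R_t(z)\|_{HS}^2 = N\,\im S_t(z)/(\im z)^3$, control $\ee\,\im S_s(z)$ uniformly by the Wegner estimate, and finish with Burkholder--Davis--Gundy and Jensen. The only cosmetic difference is that the paper carries out the intermediate bound on $\|R^2\|_{HS}^2$ via the identity $\partial_z\langle\delta_v, R\delta_u\rangle = \langle\delta_v, R^2\delta_u\rangle$ and the operator-norm bound $\|R\|\le(\im z)^{-1}$, whereas you diagonalize $H_s$ directly; these are the same estimate.
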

\begin{proof} By symmetrization,
\begin{align*}M_t(z) &= \frac{1}{N} \sum_x M_t(x,x;z)\\
&= -\frac{1}{N^{3/2}} \sum_{u, v} \frac{1}{\sqrt{1 + \delta_{uv}}}  \int_0^t \! \sum_x \langle \delta_v, R_s(z) \delta_x \rangle \langle \delta_x, R_s(z) \delta_u\rangle \, dB_{uv}(s)\\
&=  -\frac{1}{N^{3/2}} \sum_{u, v} \frac{1}{\sqrt{1 + \delta_{uv}}} \int_0^t \! \parder{}{z} \langle \delta_v, R_s(z) \delta_u\rangle \, dB_{uv}(s),
\end{align*}
so the quadratic variation may be expressed as
\begin{align*}\langle M_t(z) \rangle &=  \frac{1}{N^3} \int_0^t \!  \sum_{u, v} \frac{1}{1 + \delta_{uv}} \left|\parder{}{z} \langle \delta_v, R_s(z) \delta_u\rangle \right|^2 \, ds\\
&\le \frac{1}{N^3(\im z)^2} \int_0^t \!  \sum_{u, v} \left|\langle \delta_v, R_s(z) \delta_u\rangle \right|^2 \, ds\\
&= \frac{1}{N^2 (\im z)^3} \int_0^t \!  \im S_s(z) \, ds.\\
\end{align*}
Using, in order, the Burkholder-Davis-Gundy inequality, Jensen's inequality, and the Wegner estimate~\eqref{eq:wegnerest} yields
\begin{align*}\ee |M_t(z) | &\le C \left(\ee  \langle M_t(z) \rangle \right)^{1/2}\\
& \le C\left( \frac{1}{N^2 (\im z)^3} \int_0^t \! \ee \im S_s(z) \, ds \right)^{1/2}\\
&\le \sqrt{\frac{Ct}{N^2 (\im z)^3}}.
\end{align*}
\end{proof}

The proofs of Theorems~\ref{thm:stabthmtr} and~\ref{thm:stabthm} now reduce to plugging the various previous estimates into the integrated forms of Theorem~\ref{thm:resflow} and Corollary~\ref{thm:resflowtr}. For the sake of completeness, we illustrate this with the proof of Theorem~\ref{thm:stabthm}, but omit the very similar proof of Theorem~\ref{thm:stabthmtr}.
\begin{proof}[Proof of Theorem~\ref{thm:stabthm}] By Theorem~\ref{thm:resflow},
\begin{align*} &\frac{1}{N}\sum_y \ee \left| G_t\left(x,y; E + i\eta\right)-  G_0\left(x,y;E + i\eta \right) \right|\\
 &\le  \frac{1}{N}\sum_y \int_0^t \! \ee \left| S_s(z) \parder{}{z} G_s(x,y; z) + \frac{1}{2N} \parder{^2}{z^2} G_s(x,y; z) \right| \, ds\\
 &+ \frac{1}{N}\sum_y \ee |M_t(x,y;z)|,
\end{align*}
which by Theorems~\ref{thm:driftbounds} and~\ref{thm:qvbounds} is bounded by
\begin{align*} &\le C t N \left( \log N + \frac{1}{N  \eta} \right) \left(1 + \frac{1}{(N  \eta)^{2}} \right) +  \frac{Ct}{\eta} + \frac{Ct}{N \eta^2}\\
 &+  C\sqrt{ \frac{t }{N \eta^2} }  \left(1 + \frac{1}{N  \eta} \right).
\end{align*}
After taking a factor $N^{-c/2}$ from $t \le N^{-(1 + c)}$ to control the $\log N$ term, each term is dominated by either $1 + (N\eta)^{-1}$ or $1 + (N\eta)^{-3}$, which proves the theorem.
\end{proof}

\section{Proof of Poisson Statistics} \label{sec:poisson}
In the remainder of this paper, we will show how to apply Theorems \ref{thm:stabthmtr} and \ref{thm:stabthm} to the ultrametric ensemble $H_n$ defined in \eqref{eq:Hdef}, thereby obtaining Theorems \ref{thm:poisson} and \ref{thm:localization}. When $ c > 0 $, the limit $ \lim_{n\to \infty} Z_{n,c}  \in (0,\infty)  $ exists, and thus we may drop the normalizing constant $ Z_{n,c} $ from the definition of  $ H_n $ without any loss of generality. Similarly to our approach in~\cite{MR3649447}, we will prove Theorem \ref{thm:poisson} by approximating $H_n  \equiv  \sum_{r=0}^n 2^{-\frac{1 + c}{2} r}\Phi_{n,r} $ with the truncated Hamiltonian
\beq{hnmdefinition} H_{n,m} =  \sum_{r=0}^m 2^{-\frac{1 + c}{2} r}\Phi_{n,r},
\eeq
which has the property that, for any $m \le k \le n$,
\beq{hnmdecomposition} H_{n,m} = \bigoplus_{j=1}^{2^{n-k}} H_{k,m}^{(j)},
\eeq
where each $H_{k,m}^{(j)}$ is an independent copy of $H_{k}$. Therefore
\[\mu_{n,m}(f) = \sum_{\lambda \in \sigma(H_{n,m})} f(2^n(\lambda - E)) \]
consists of $2^{n-m}$ independent components, a fact whose relevance to Theorem \ref{thm:poisson} is contained in the following characterization of Poisson point processes.
\begin{proposition} \label{poissoncharacterization}
Let $\{\mu_{n,j} \, | \, j = 1, \dots, N_n\}$ be a collection of point processes such that:
\begin{enumerate}
\item The point processes $\{\mu_{n,1}, \dots, \mu_{n, N_n}\}$ are independent for all $n \geq 1$.
\item If $B \subset \rr$ is a bounded Borel set, then
\[\lim_{n \to \infty} \sup_{j \le N_n} \pp(\mu_{n,j}(B) \geq 1) = 0.\]
\item There exists some $c \geq 0$ such that if $B \subset \rr$ is a bounded Borel set with $|\partial B| = 0$, then
\[\lim_{n \to \infty} \sum_{j=1}^{N_n} \pp(\mu_{n,j}(B) \geq 1) = c|B|\]
and
\[\lim_{n \to \infty} \sum_{j=1}^{N_n} \pp(\mu_{n,j}(B) \geq 2) = 0 .\]
\end{enumerate}
Then, $\mu_n = \sum_j \mu_{n,j}$ converges in distribution to a Poisson point process with intensity $c$.
\end{proposition}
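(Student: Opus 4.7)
The plan is to establish convergence of Laplace functionals: for every nonnegative $f \in C_c(\rr)$,
\[\ee \exp(-\mu_n(f)) \longrightarrow \exp\!\left(-c\int_\rr (1-e^{-f(x)})\, dx\right),\]
which is equivalent to convergence in distribution of $\mu_n$ to the Poisson point process with intensity $c$. By the independence assumption (1), the left-hand side factors as $\prod_{j=1}^{N_n}(1-q_{n,j})$ with $q_{n,j} := \ee[1 - \exp(-\mu_{n,j}(f))]$. Fix a bounded Borel set $B \supset \mathrm{supp}(f)$ with $|\partial B|=0$. Since $1-e^{-\mu_{n,j}(f)}$ vanishes outside $\{\mu_{n,j}(B)\geq 1\}$, assumption (2) yields $\sup_j q_{n,j} \to 0$, while the first part of (3) bounds $\sum_j q_{n,j}$ uniformly in $n$. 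The elementary expansion $\log(1-x) = -x + O(x^2)$ then gives
\[\log \ee \exp(-\mu_n(f)) = -\sum_{j=1}^{N_n} q_{n,j} + o(1),\]
reducing the task to showing $\sum_j q_{n,j} \to c\int(1-e^{-f(x)})dx$.

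I split the expectation defining $q_{n,j}$ according to $\mu_{n,j}(B) \in \{0, 1, \geq 2\}$. The zero-point case contributes nothing; the multi-point case contributes at most $\sum_j \pp(\mu_{n,j}(B) \geq 2) \to 0$ by the second part of (3). What remains is $\sum_j \ee\!\left[(1-e^{-f(X_{n,j})})\mathbf{1}\{\mu_{n,j}(B)=1\}\right]$, where $X_{n,j}$ denotes the unique atom of $\mu_{n,j}$ in $B$ on this event. I partition $B$ into Jordan-measurable cells $B_1, \dots, B_K$ of diameter at most $\delta$ with Lebesgue-null boundaries and pick $x_i \in B_i$. Uniform continuity of $1-e^{-f}$ furnishes $\epsilon(\delta)\to 0$ such that
\[1-e^{-f(X_{n,j})} = \sum_{i=1}^K (1-e^{-f(x_i)})\, \mathbf{1}\{X_{n,j}\in B_i\} + O(\epsilon(\delta))\]
on $\{\mu_{n,j}(B)=1\}$. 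Summing in $j$ and using $\sum_j \pp(\mu_{n,j}(B)\geq 1) = O(1)$, the discretization error is $O(\epsilon(\delta))$ uniformly in $n$, while the main term rewrites, up to an additional error bounded by $\sum_j \pp(\mu_{n,j}(B)\geq 2) \to 0$, as
\[\sum_{i=1}^K (1-e^{-f(x_i)}) \sum_{j=1}^{N_n} \pp(\mu_{n,j}(B_i) \geq 1).\]
Applying (3) to each cell $B_i$ shows that the inner sums converge to $c|B_i|$, producing the Riemann sum $\sum_i (1-e^{-f(x_i)})\, c|B_i|$, which tends to $c\int(1-e^{-f(x)})dx$ as $\delta \to 0$.

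The main obstacle is the correct interchange of the limits $n \to \infty$ and $\delta \to 0$ in the last step; this relies essentially on the uniform-in-$n$ total-mass bound $\sum_j \pp(\mu_{n,j}(B)\geq 1) = O(1)$ supplied by (3), which lets the $O(\epsilon(\delta))$ discretization error be absorbed before $\delta$ is sent to zero. The remainder is a routine instance of the classical Poisson limit theorem for triangular arrays of indicator variables $\mathbf{1}\{\mu_{n,j}(B_i) \geq 1\}$, whose row sums have asymptotically Poisson distribution thanks to the negligibility condition (2) and the mass convergence (3).
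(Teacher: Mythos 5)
Your proof is correct, and it follows the standard Laplace-functional approach to this type of triangular-array Poisson limit theorem. Note that the paper itself states the proposition without proof, deferring to the literature (the reference it cites for convergence of point processes), so there is no in-paper argument to compare against; your argument is the expected one.

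Two small points worth tidying up. First, when you replace $\pp(X_{n,j}\in B_i,\ \mu_{n,j}(B)=1)$ by $\pp(\mu_{n,j}(B_i)\geq 1)$, the per-cell error is $\pp(\mu_{n,j}(B_i)\geq 1,\ \mu_{n,j}(B)\geq 2)$, and after summing over the $K$ cells of the partition the total error for fixed $j$ is bounded by $K\,\pp(\mu_{n,j}(B)\geq 2)$, not by $\pp(\mu_{n,j}(B)\geq 2)$ alone; so the aggregate correction is $O\bigl(K\sum_j \pp(\mu_{n,j}(B)\geq 2)\bigr)$. Since $K$ is fixed once $\delta$ is fixed and you send $n\to\infty$ before $\delta\to 0$, this still vanishes, but the constant should be recorded. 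Second, the reduction from $\prod_j(1-q_{n,j})$ to $\exp(-\sum_j q_{n,j}+o(1))$ uses $\sum_j q_{n,j}^2\leq (\sup_j q_{n,j})\sum_j q_{n,j}\to 0$, which does hold by (2) and the uniform boundedness from (3); it would be worth stating this inequality explicitly, since it is exactly where hypothesis (2) enters.
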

We recall \cite{MR3364516} that a sequence of point processes $\mu_n$ converges in distribution to $\mu$ whenever
\[\lim_{n \to \infty} \ee e^{-\mu_n(P_z)}= \ee e^{-\mu(P_z)}\]
for all $z \in \cp$, where $P_z$ is the rescaled Poisson kernel
\beq{poissonkerneldef} P_z(\lambda) = \im \frac{1}{\lambda - z} = \frac{\im z}{(\lambda - \re z)^2 + (\im z)^2}.
\eeq
Hence, Theorem \ref{thm:poisson} follows by furnishing a sequence $m_n$ such that Proposition~\ref{poissoncharacterization} applies to $\mu_{n, m_n}$ and
\beq{needtoprove} \lim_{n \to \infty} \ee e^{-\mu_{n,m_n}(P_z)} = \lim_{n \to \infty} \ee e^{-\mu_n(P_z)}
\eeq
for all $z \in \cp$.

The difference $ H_n - H_{n,n-1} =  \sqrt{t\,}  \Phi_{n,n} $ is a Gaussian perturbation with time parameter $t = 2^{-(1+c) n }$. Therefore, Theorem \ref{thm:stabthmtr} shows that there exists $ C_z < \infty $ such that for all $\ell \geq n$ we have
\begin{align} \label{resflowbound1} \frac{1}{2^n}\, \ee \left| \tr (H_n - \zel)^{-1} - \tr (H_{n,n-1} - \zel)^{-1}\right| &\le C_z \, 2^{-\frac{c}{2}n-1}\,\left( 1+ 2^{3(\ell-n)}\right) \nonumber \\
&\leq C_z \, 2^{-\frac{c}{2}n} \,  2^{3(\ell-n) } 
\end{align}
with $\zel = E + 2^{-\ell}z$. Our strategy in achieving \eqref{needtoprove} thus consists of applying  \eqref{resflowbound1} to the finite-volume density of states measures
\[\nu_n(f) = 2^{-n} \tr f \left(H_n\right) ,   \quad \nu_{n,m}(f) = 2^{-n} \tr f \left(H_{n,m}\right) \]
in an iterative fashion.

\begin{theorem} \label{approximationbynunm} There exist $C_z < \infty$ and $\delta > 0$ such that
 \[ \ee \left| \nu_{n}\left(P_\zel\right) - \nu_{n,m}\left(P_\zel\right) \right| \le C_z\, 2^{3(\ell - (1+\delta) m)}\]
 for all $\ell \geq n$.
\end{theorem}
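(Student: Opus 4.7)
The plan is to interpolate between $H_{n,m}$ and $H_n$ along the truncation parameter, writing the difference as the telescoping sum
\[\nu_n(P_{\zel}) - \nu_{n,m}(P_{\zel}) = \sum_{k=m}^{n-1} \bigl( \nu_{n,k+1}(P_{\zel}) - \nu_{n,k}(P_{\zel}) \bigr),\]
and estimating each increment by a single application of Theorem~\ref{thm:stabthmtr}. The key observation is that, at step $k$, the decomposition \eqref{hnmdecomposition} exhibits both $H_{n,k}$ and $H_{n,k+1}$ as independent direct sums over the same $2^{n-k-1}$ blocks of size $N_k := 2^{k+1}$, and within each such block the increment $H_{n,k+1} - H_{n,k}$ is precisely $2^{-(1+c)(k+1)/2}\Phi^{(j)}$ where $\Phi^{(j)}$ is an independent $N_k \times N_k$ GOE matrix. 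In the language of Section~\ref{sec:flowsec}, this is a Dyson-Brownian-motion time increment of $t = 2^{-(1+c)(k+1)} = N_k^{-(1+c)}$.

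To invoke Theorem~\ref{thm:stabthmtr} on a fixed block I need to identify the pieces $T+V+\Phi_t$. I would take $V$ to be the diagonal Gaussian part inherited from $\Phi_{n,0}$ restricted to the block: its entries are i.i.d.\ centered Gaussians of variance $2$, independent of $\Phi_{n,r}$ for all $r \ge 1$, so \eqref{eq:potassumption} holds with $C_V$ depending only on the Gaussian density. The matrix $T$ is then the remaining hierarchical structure $\sum_{r=1}^{k} 2^{-(1+c)r/2} \Phi_{n,r}$ restricted to the block, which is independent of $V$ and of the GOE increment. With these identifications, Theorem~\ref{thm:stabthmtr} produces, for each block $j$, a bound of the form
\[\ee \bigl| \tilde S_{k+1}^{(j)}(\zel) - \tilde S_{k}^{(j)}(\zel) \bigr| \le C\, N_k^{-c/2}\left( 1 + (N_k \eta)^{-1} + (N_k \eta)^{-3}\right),\]
where $\tilde S_{k}^{(j)}$ is the normalized trace of the resolvent on block $j$ and $\eta = 2^{-\ell}\im z$. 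The hypothesis $\ell \ge n \ge k+1$ gives $N_k\eta \lesssim 1$, so the $(N_k\eta)^{-3}$ term dominates and the block-wise bound simplifies to $C_z\, 2^{-c(k+1)/2}\, 2^{3(\ell-k-1)}$.

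Writing $\nu_{n,k}(P_{\zel})$ as the average of $\im \tilde S_k^{(j)}(\zel)$ over the $2^{n-k-1}$ blocks and using the triangle inequality (together with identical distribution across blocks) then transfers the block-wise bound directly to the global increment. Summing the resulting geometric series,
\[\ee\bigl|\nu_n(P_{\zel}) - \nu_{n,m}(P_{\zel})\bigr| \le C_z\, 2^{3\ell} \sum_{k=m+1}^{n} 2^{-(3 + c/2)k} \le C_z\, 2^{3\ell - (3+c/2) m} = C_z\, 2^{3(\ell - (1 + c/6) m)},\]
so the theorem holds with $\delta = c/6$.

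The only non-routine point is the bookkeeping required to recognize the hierarchical increment as an instance of the abstract flow \eqref{eq:flowdef}, in particular isolating the diagonal Gaussian component from $\Phi_{n,0}$ to serve as the regularizing potential $V$ and verifying its independence from the rest. Once this identification is in place the estimate reduces to summing a geometric series and is quantitatively driven entirely by the factor $N_k^{-c/2} \cdot (N_k\eta)^{-3}$ delivered by Theorem~\ref{thm:stabthmtr} at each level.
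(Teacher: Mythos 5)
Your proof is correct and takes essentially the same route as the paper: telescoping over the truncation level, reducing each increment to a single block via the direct-sum structure~\eqref{hnmdecomposition}, invoking Theorem~\ref{thm:stabthmtr} with the diagonal piece $\Phi_{n,0}$ serving as the regularizing potential $V$, and summing the resulting geometric series to get $\delta = c/6$. The only difference is that you spell out the $T$, $V$, $\Phi_t$ identification explicitly (the paper leaves this implicit in its preliminary estimate~\eqref{resflowbound1}) and use a shifted summation index, neither of which changes the substance.
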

\begin{proof} The estimate \eqref{resflowbound1} proves that
\beq{thmappl} \ee \left| \nu_{k}\left(P_\zel\right) - \nu_{k,k-1}\left(P_\zel\right) \right| \le C_z\,2^{3(\ell - (1+\delta) k)}
\eeq
with $\delta = c/6$ when $\ell \geq k$. Since $\nu_n - \nu_{n,m}$ is given by a telescopic sum,
\[\nu_n(P_\zel) - \nu_{n,m}(P_\zel) =  \sum_{k=m+1}^{n} \left( \nu_{n,k}(P_\zel) -  \nu_{n,k-1}(P_\zel)\right),\]
the decomposition~\eqref{hnmdecomposition} implies that
\beq{telescopicterm} \nu_{n,k}(P_\zel) -  \nu_{n,k-1}(P_\zel)= 2^{-(n-k)} \sum_{j=1}^{2^{n-k}} \left( \nu_{k}(P_\zel) -  \nu_{k,k-1}(P_\zel) \right).
\eeq
Applying \eqref{thmappl} to each term in \eqref{telescopicterm} yields
\begin{align*} \ee \left| \nu_{n}\left(P_\zel\right) - \nu_{n,m}\left(P_\zel\right) \right| & \le \sum_{k=m+1}^{n} C_z\, 2^{3(\ell - (1 + \delta) k)} \le C_z\, 2^{3(\ell - (1 + \delta) m)}. 
\end{align*}
\end{proof}

Theorem \ref{approximationbynunm} has two important implications for the measures $\mu_n$ and $\mu_{n,m}$ which are based on the identities $\mu_n(P_z) =  \nu_n\left(P_{z_n}\right)$ and $\mu_{n,m}(P_z) =  \nu_{n,m}\left(P_{z_n}\right)$. The first of these enables us to find a suitable sequence $\mu_{n,m_n}$ satisfying \eqref{needtoprove}.
\begin{corollary}\label{divisibility}
There exists a sequence $m_n$ with $m_n \to \infty$ and $n - m_n \to \infty$ such that
\[\lim_{n \to \infty} \ee \left|\mu_n(P_z) -\mu_{n,m_n}(P_z) \right| = 0\]
for all $z \in \cp$.
\end{corollary}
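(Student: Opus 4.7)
The plan is to reduce the corollary directly to Theorem~\ref{approximationbynunm} by exploiting the identities $\mu_n(P_z) = \nu_n(P_{z_n})$ and $\mu_{n,m}(P_z) = \nu_{n,m}(P_{z_n})$ pointed out just before the corollary, where $z_n = E + 2^{-n} z$ corresponds to the case $\ell = n$ in the notation of Theorem~\ref{approximationbynunm}. These identities are a one-line verification: $P_{z_n}$ is a rescaled Poisson kernel of width $2^{-n}\im z$ centered at $E + 2^{-n}\re z$, and the normalizing factor $2^{-n}$ built into $\nu_n$ combines with this rescaling to recover exactly the sum $\sum_\lambda P_z(2^n(\lambda - E))$ defining $\mu_n(P_z)$, and similarly for $\mu_{n,m}$.

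With this translation in hand, I would apply Theorem~\ref{approximationbynunm} at $\ell = n$ (which trivially satisfies the hypothesis $\ell \geq n$) to obtain the estimate
\[
\ee \left| \mu_n(P_z) - \mu_{n,m}(P_z) \right| \;=\; \ee \left| \nu_n(P_{z_n}) - \nu_{n,m}(P_{z_n}) \right| \;\leq\; C_z \, 2^{3(n - (1+\delta) m)},
\]
with the same $\delta > 0$ that Theorem~\ref{approximationbynunm} supplies. The task then reduces to choosing a sequence $m_n$ that simultaneously makes $n - (1+\delta) m_n \to -\infty$ (so the right-hand side decays exponentially in $n$) while still satisfying $m_n \to \infty$ and $n - m_n \to \infty$, as required by the statement.

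Since $\delta > 0$, all three constraints can be met by any linear sequence strictly between $n/(1+\delta)$ and $n$; a concrete choice is $m_n = \lceil n/(1 + \delta/2) \rceil$. For this choice one checks directly that $n - m_n \sim \tfrac{\delta}{2+\delta}\, n \to \infty$ and $(1+\delta) m_n - n \sim \tfrac{\delta/2}{1+\delta/2}\, n \to \infty$, so the bound above tends to $0$ exponentially in $n$ for every fixed $z \in \cp$. Honestly, there is no real obstacle in this argument: the corollary is a direct consequence of Theorem~\ref{approximationbynunm} once the identities between $\mu$ and $\nu$ are recorded, and the only mild subtlety is the elementary bookkeeping needed to verify that the two competing constraints on $m_n$ are compatible, which is guaranteed by the strict positivity of $\delta$.
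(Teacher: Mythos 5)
Your proposal is correct and follows the same route as the paper: both apply Theorem~\ref{approximationbynunm} at $\ell = n$ via the identity $\mu_n(P_z) = \nu_n(P_{z_n})$ and then choose $m_n$ between $n/(1+\delta)$ and $n$ so that $n - (1+\delta)m_n \to -\infty$ while $m_n \to \infty$ and $n - m_n \to \infty$. The only difference is that you exhibit the explicit sequence $m_n = \lceil n/(1+\delta/2) \rceil$, whereas the paper merely notes that such a sequence exists because $\delta > 0$.
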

\begin{proof} Since $\delta > 0$, there exists a sequence $m_n$ with $m_n \to \infty$, $n - m_n \to \infty$ and $n - (1 + \delta)m_n \to -\infty$. By applying Theorem~\ref{approximationbynunm} with $\ell = n$, we obtain
\[ \ee \left|\mu_n(P_z) - \mu_{n,m_n}(P_z)\right| \le C_z 2^{3(n - (1 + \delta)m_n)} \to 0.\]
\end{proof}

To finish the proof of Theorem \ref{thm:poisson}, we need to show that $\mu_{n,m_n}$ satisfies the hypothesis of Proposition~\ref{poissoncharacterization}. By~\eqref{hnmdecomposition}, $\mu_{n,m_n}$ is a sum of point processes
\[\mu_{n,m_n} = \sum_{j=1}^{2^{n-m_n}} \mu_{m_n,j}\]
with independent $\mu_{m_n, j}$. If $B \subset \rr$ is a bounded Borel set, the theorem of Combes-Germinet-Klein \cite{MR2505733} asserts that 
$\pp(\tr 1_B(H_{m}) \geq \ell) \le \left(C \, 2^m |B| \right)^\ell/ \ell! $
and hence for any $ \ell \geq 0 $:
\begin{equation}\label{cgkbound}
\pp(\mu_{m_n, j}(B) \geq \ell) \le \frac{(C |B| \, 2^{m_n-n})^\ell}{\ell!}.
\end{equation}
Since $n-m_n \to \infty$, the first hypothesis of Proposition~\ref{poissoncharacterization} follows. Writing
\[X(n, \ell) = \sum_{j=1}^{2^{n-m}} \pp(\mu_{m_n, j}(B) \geq \ell),\]
~\eqref{cgkbound} implies
\[X(n,\ell) \le 2^{n-m_n} \frac{(C |B|\, 2^{m_n-n})^\ell}{\ell!} \to 0\]
when $\ell \geq 2$. In particular, $X(n, 2) \to 0$ and the last hypothesis of Proposition~\ref{poissoncharacterization} is satisfied. It remains to prove the remaining hypothesis of Proposition~\ref{poissoncharacterization}, which is the second important consequence of Theorem \ref{approximationbynunm} and is contained in the following theorem.
\begin{theorem}  Let $B\subset \mathbb{R} $ be a bounded Borel set. Then,
\[\lim_{n \to \infty} X(n,1) = \nu(E)|B|.\]
\end{theorem}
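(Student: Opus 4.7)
The plan is to proceed in three steps: (i) reduce $X(n,1)$ to the expectation $\ee\mu_{n,m_n}(B)$ by a second-moment bound; (ii) transfer from $\mu_{n,m_n}$ to $\mu_n$ using Corollary~\ref{divisibility}; and (iii) identify $\lim\ee\mu_n(B)$ via the Lebesgue-point property of $\nu$ at $E$.

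For step (i), I would use the elementary identity $\pp(N\geq 1) = \ee N - \sum_{\ell\geq 2}\pp(N\geq \ell)$ with $N = \mu_{m_n,j}(B)$ and sum over $j$, which gives $0 \leq \ee\mu_{n,m_n}(B) - X(n,1) = \sum_{\ell\geq 2} X(n,\ell)$. By~\eqref{cgkbound} this sum is controlled by a convergent geometric series with leading term of order $2^{m_n - n}|B|^2$, and therefore vanishes as $n - m_n \to \infty$. For step (ii), I would approximate $1_B$ by its Poisson smoothing $f_\eta(\alpha) = (1/\pi)\int_B P_{s+i\eta}(\alpha)\,ds$, which satisfies $\int|f_\eta - 1_B|\,d\alpha = O(\eta)$ for fixed bounded $B$. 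Applying Fubini together with the pointwise bound of Corollary~\ref{divisibility} in $s$ yields $\ee[\mu_n(f_\eta) - \mu_{n,m_n}(f_\eta)] \to 0$ for every fixed $\eta > 0$, while the Wegner estimate gives $\ee\mu_n(|1_B - f_\eta|), \ee\mu_{n,m_n}(|1_B - f_\eta|) = O(\eta)$ uniformly in $n$. Passing to $n\to\infty$ first and then $\eta\to 0$ yields $\ee\mu_n(B) - \ee\mu_{n,m_n}(B) \to 0$, so the claim reduces to $\ee\mu_n(B) \to \nu(E)|B|$.

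For step (iii), I would write $\ee\mu_n(B) = 2^n\,\ee\nu_n(E + 2^{-n}B) = \int_B \rho_n(E + 2^{-n}\alpha)\,d\alpha$, where $\rho_n$ is the uniformly bounded density of $\ee\nu_n$ (Wegner). Combining the convergence $\ee\nu_n \to \nu$ with the bounded-density hypothesis on $\nu$ and the Lebesgue-point property at $E$ should identify the limit as $\nu(E)|B|$. Equivalently, this is the statement that $\ee\mu_n(P_z) = \im\ee S_n(E + i\cdot 2^{-n}\im z)$ converges to $\pi\nu(E)$ for every $z \in \cp$, from which the indicator claim follows by the same sandwiching argument used in step (ii).

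The main obstacle is this last convergence at the microscopic scale $2^{-n}$: since the evaluation point $E + i\cdot 2^{-n}\im z$ approaches the real axis as $n \to \infty$, neither the vague convergence of $\ee\nu_n$ to $\nu$ nor the uniform convergence of Stieltjes transforms on compact subsets of $\cp$ suffices in isolation. To bridge this, I would couple the Lebesgue-point limit $\im S_\nu(E+i\eta)/\pi \to \nu(E)$ as $\eta\to 0$ with a quantitative rate for $\ee S_n \to S_\nu$ near the real axis, supplied by the Wegner estimate and the telescoping inherent in Theorem~\ref{approximationbynunm}, in order to exchange the order of the limits $\eta\to 0$ and $n\to\infty$. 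Balancing these two rates is the technical heart of the argument.
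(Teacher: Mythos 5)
Your steps (i) and (ii) are correct and mirror the paper's proof closely: the paper also reduces $X(n,1)$ to $\ee\mu_{n,m_n}(B)$ via the identity $\pp(N\geq 1) = \ee N - \sum_{\ell\geq 2}\pp(N\geq \ell)$ together with~\eqref{cgkbound}, and it also passes from the $P_z$-test functions to indicators of bounded Borel sets using the Wegner estimate and $L^1$-density of $\operatorname{span}\{P_z\}$ (your Poisson-smoothing variant is a cosmetic difference).

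The genuine gap is in your step (iii), and you are right to flag it as the technical heart. You write that you would supply a quantitative rate for $\ee S_n \to S_\nu$ near the real axis from ``the Wegner estimate and the telescoping inherent in Theorem~\ref{approximationbynunm}'' and then balance it against the Lebesgue-point rate. But Theorem~\ref{approximationbynunm} only compares $\nu_n$ with its \emph{own} truncation $\nu_{n,m}$ — it does not directly relate $\ee\nu_n$ to the infinite-volume density $\nu$, and there is no competing rate to ``balance'' in the way you describe. The decisive observation, which your plan does not supply, is that the block decomposition~\eqref{hnmdecomposition} implies $\ee\nu_{p,n} = \ee\nu_n$ for every $p \geq n$. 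With this identity one writes
\[
\bigl|\ee[\nu_n(P_{z_n})] - \nu(P_{z_n})\bigr| = \lim_{p\to\infty}\bigl|\ee[\nu_{p,n}(P_{z_n}) - \nu_p(P_{z_n})]\bigr| \le C_z\,2^{-3\delta n},
\]
where the last bound comes from Theorem~\ref{approximationbynunm} applied with $\ell = n$ and is \emph{uniform in $p$}. This one chain of equalities and estimates is what lets you evaluate the Stieltjes transform of $\ee\nu_n$ at the microscopic point $z_n = E + 2^{-n}z$ and exchange the limits; the Lebesgue-point hypothesis is then only needed to conclude $2^n\nu(2^{-n}B + E) \to \nu(E)|B|$, which is elementary. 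Without the identity $\ee\nu_{p,n} = \ee\nu_n$, the plan has no mechanism for closing the gap you correctly identified, so as written the proposal would not go through.
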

\begin{proof} By \eqref{hnmdecomposition} we have $ \ee \nu_{p,n} = \ee \nu_{n} $ for any $ p \geq n$, and so we conclude from Theorem~\ref{approximationbynunm} with $\ell = n$ that
\begin{align*}\lim_{n \to \infty} \left|\ee \left[\nu_n(P_{z_n}) - \nu(P_{z_n})\right]\right| &= \lim_{n \to \infty}\lim_{p \to \infty} \left|\ee \left[ \nu_n(P_{z_n}) - \nu_p(P_{z_n}) \right]\right|\\
&= \lim_{n \to \infty}\lim_{p \to \infty} \left| \ee \left[ \nu_{p,n}(P_{z_n}) - \nu_p(P_{z_n}) \right]\right|\\
&\le  \lim_{n \to \infty} C_z \, 2^{-3\delta n} = 0.
\end{align*}
This shows that the measures $\lambda_n(B) = 2^n \nu(2^{-n}B + E)$ satisfy
\[\lim_{n \to \infty}\left( \ee \mu_n(P_z) - \lambda_n(P_z)\right)= 0\]
and Corollary \ref{divisibility} implies that also
\begin{equation}\label{intensityconvergence}
\lim_{n \to \infty}\left(\ee \mu_{m_n}(P_z) - \lambda_n(P_z)\right)= 0.
\end{equation}
For any bounded Borel set $B \subset \rr$, the indicator $1_B$ is in the $L^1$-closure of the finite linear combinations from the set $\{P_z \, | \, z \in \cp\}$ and the measures $\ee \mu_n$ are absolutely continuous with uniformly bounded densities by the Wegner estimate. Together, these two observations yield that \eqref{intensityconvergence} is valid for any bounded Borel set $B \subset \rr$. Moreover, since $E$ is a Lebesgue point of $\nu$,
\[\lim_{n \to \infty} \lambda_n(B) = \lim_{n \to \infty} 2^n \nu(2^{-n}B + E) = \nu(E)|B|,\]
and hence we have shown that
\beq{indicatorconvergence} \lim_{n \to \infty} \ee \mu_{m_n}(B) = \nu(E)|B|.
\eeq
Since $\mu_{n_m, j}(B)$ takes values in the non-negative integers
\[\lim_{n \to \infty} X(n,1) = \lim_{n \to \infty} \sum_{j=1}^{2^{n - m}} \ee \mu_{n_m, j}(B) - \lim_{n \to \infty} \sum_{\ell \geq 2} X(n,\ell)\]
so ~\eqref{cgkbound}, \eqref{indicatorconvergence} and the dominated convergence theorem give
\[\lim_{n \to \infty} X(n,1) = \lim_{n \to \infty} \sum_{j=1}^{2^{n - m}} \ee \mu_{n_m, j}(B) = \nu(E)|B|.\]
\end{proof}

\section{Proof of Eigenfunction Localization} \label{sec:localization}
In this section, we prove Theorem \ref{thm:localization} by comparing the eigenfunctions of $H_n$ with the obviously localized eigenfunctions of $H_{n,m}$. Nevertheless, we again start by considering a more general $N \times N$ random matrix $H = T+V$ with a potential satisfying~\eqref{eq:potassumption} and proving an implication of local resolvent bounds for the eigenfunction correlator
\[Q(x,y;W) = \sum_{\lambda \in \sigma(H) \cap W} |\psi_\lambda(x)\psi_\lambda(y)|\]
in some mesoscopic spectral window
\[W = \left[E_0 - N^{-(1-w)}, E_0 + N^{-(1-w)}\right]\]
with $w > 0$.

\begin{theorem} \label{thm:qbound} Let $\eta = N^{-(1+\ell)}$ with $\ell > w > 0$ and let $Y \subset \{1,\dots,N\}$. Then, there exists a constant $C < \infty$, depending only on $C_V$, such that
\[\pp\left(\sum_{y \in Y} Q(x,y;W) > \frac{2}{\pi}\sum_{y \in Y} \int_W \! \left| \im G(x,y;E+i\eta) \right| \, dE + \frac{\log N}{N^w}\right) \le CN^{w-\ell}\]
for all $x \in \{1,\dots,N\}$.
\end{theorem}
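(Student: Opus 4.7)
The plan is to deduce the bound from a deterministic Poisson-kernel comparison combined with a Markov-type control of the residual interference using the spectral-averaging estimate of Lemma~\ref{thm:specavgext}(ii).

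First, identify $Q(x,y;W) = |\mu_{xy}|(W)$ for the signed spectral measure $\mu_{xy} = \sum_\lambda \psi_\lambda(x)\psi_\lambda(y)\delta_\lambda$, and note that $\im G(x,y;E+i\eta) = (P_\eta \ast \mu_{xy})(E)$ with the Poisson kernel $P_\eta(t) = \eta/(t^2+\eta^2)$. For every $\lambda \in W = [a,b]$, the elementary calculation $\int_W P_\eta(\lambda-E)\,dE = \arctan((\lambda-a)/\eta) + \arctan((b-\lambda)/\eta) \ge \pi/2$ is valid as long as $|W|/\eta \ge 1$, which holds since $|W|/\eta = 2N^{w+\ell}$. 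This yields the basic inequality $|\nu|(W) \le (2/\pi) \int_W (P_\eta \ast |\nu|)(E)\,dE$ for any signed measure $\nu$. Applying this to $\mu_{xy}$ and using the Hahn decomposition $\mu_{xy} = \mu_{xy}^+ - \mu_{xy}^-$ together with the pointwise identity $(P_\eta \ast |\mu_{xy}|)(E) - |\im G(x,y;E+i\eta)| = 2\min\bigl((P_\eta \ast \mu_{xy}^+)(E),(P_\eta \ast \mu_{xy}^-)(E)\bigr)$ produces the deterministic inequality
\[
Q(x,y;W) \;\le\; \tfrac{2}{\pi} \int_W |\im G(x,y;E+i\eta)|\,dE \;+\; \tfrac{4}{\pi}\int_W \min\bigl((P_\eta \ast \mu_{xy}^+)(E),(P_\eta \ast \mu_{xy}^-)(E)\bigr)\,dE.
\]
The interference integrand is non-trivial only where atoms of $\mu_{xy}$ of opposite sign lie within $O(\eta)$ of each other, a Minami-type coincidence.

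Summing over $y \in Y$ and applying Markov's inequality reduces the theorem to an expectation bound on the $y$-summed interference integral at scale $N^{-\ell}\log N$. Following the dyadic strategy from the proofs of Theorems~\ref{thm:driftbounds} and~\ref{thm:qvboundlemma}, bound $\min(a,b)^2 \le ab$ and apply Cauchy--Schwarz in $E$ to reduce the interference to integrals of the product $(P_\eta \ast \mu_{xy}^+)(P_\eta \ast \mu_{xy}^-)$; the Poisson semigroup identity $\int P_\eta(\lambda-E)P_\eta(\lambda'-E)\,dE = \pi P_{2\eta}(\lambda-\lambda')$ expands this into the pair sum $\pi\sum_{\lambda,\lambda'} |\psi_\lambda(x)\psi_\lambda(y)\psi_{\lambda'}(x)\psi_{\lambda'}(y)|P_{2\eta}(\lambda-\lambda')$. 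The orthonormality-based bound $\sum_{y\in Y} |\psi_\lambda(y)\psi_{\lambda'}(y)| \le 1$ absorbs the $y$-summation, leaving the expectation $\ee \sum_{\lambda,\lambda'}\psi_\lambda(x)^2 P_{2\eta}(\lambda-\lambda')$ to be controlled by dyadic discretization of $\rr$ at scale $N^{-1}$ against the joint-measure bound $\ee[\mu_x(I)\nu(J)] \le C(|J|+2/N)|I|$ of Lemma~\ref{thm:specavgext}(ii), in direct parallel with the proof of Theorem~\ref{thm:qvboundlemma}.

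The central difficulty is the careful bookkeeping of the parameters $\eta = N^{-(1+\ell)}$, $|W| = 2N^{-(1-w)}$, and $|Y| \le N$ through the dyadic summation so that the residual expectation comes out at precisely the scale $N^{-\ell}\log N$ needed to close the Markov argument at probability $O(N^{w-\ell})$; loose applications of Cauchy--Schwarz would destroy the gain from the Minami-type suppression of clusters. The logarithmic factor $\log N$ emerges naturally as the typical by-product of the dyadic summation at scale $N^{-1}$.
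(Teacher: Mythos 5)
Your strategy is genuinely different from the paper's, and unfortunately it does not close. There are two issues, the second of which is fatal.

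First, the deterministic Poisson-kernel bound is false as stated. You claim that $\int_W P_\eta(\lambda-E)\,dE = \arctan((\lambda-a)/\eta)+\arctan((b-\lambda)/\eta)\ge \pi/2$ for every $\lambda\in W=[a,b]$ once $|W|/\eta\ge 1$; but the infimum over $\lambda\in W$ is attained at the endpoints, where the integral equals $\arctan(|W|/\eta)$, which is strictly less than $\pi/2$ for every finite $|W|/\eta$. Atoms of $\mu_{xy}$ near $\partial W$ therefore violate the claimed inequality. This is fixable by peeling off a boundary layer of width $\sim\eta$ and disposing of it by spectral averaging, but you do not do this. The paper instead builds exactly such a separation condition into the event $\mathcal{E}$ of Lemma~\ref{thm:qlemma2}, which demands that all eigenvalues in $W$ be $2S$-separated from $\partial W$.

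The decisive gap is in the interference term. After the Hahn decomposition you must bound $\sum_{y\in Y}\int_W \min\bigl((P_\eta\ast\mu_{xy}^+),(P_\eta\ast\mu_{xy}^-)\bigr)\,dE$ in expectation at the scale $N^{-\ell}\log N$ so that Markov yields probability $O(N^{w-\ell})$. Your outline (Cauchy--Schwarz in $E$, Poisson semigroup identity, then $\sum_{y}|\psi_\lambda(y)\psi_{\lambda'}(y)|\le 1$) does not combine: the $y$-sum sits outside a square root, so the orthonormality bound cannot be applied inside before a further Cauchy--Schwarz in $y$, which introduces a factor $|Y|^{1/2}\le N^{1/2}$. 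Chasing the estimates through, the Minami-type pair sum $\sum_{\lambda\neq\lambda'}\psi_\lambda(x)^2 P_{2\eta}(\lambda-\lambda')$ has expectation of order $N$ (the $\lambda'$-sum alone is $\sim N\pi$ since $\nu$ has density $\sim N$, while $\sum_\lambda\psi_\lambda(x)^2=1$), so the full bound is of order $\sqrt{|W|}\cdot N^{1/2}\cdot N^{1/2}\sim N^{(1+w)/2}$ — enormous rather than $N^{-\ell}\log N$. The point you are missing is that the cancellation between opposite-sign atoms cannot be captured in expectation via Cauchy--Schwarz; it has to be captured pathwise. The paper does this by restricting to the Minami event $\mathcal{E}$ where eigenvalues in $W$ are $2S$-separated, constructing the compactly supported sign function $f = \sum_\lambda \sgn[\psi_\lambda(x)\psi_\lambda(y)]\,1_{[\lambda-S,\lambda+S]}$, and proving the deterministic inequality $\sgn[\psi_\lambda(x)\psi_\lambda(y)](f\ast P_{i\eta})(\lambda)\ge\pi/2$. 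The interference term then never arises; what is left over is only the leakage from $\lambda\notin W$, which is controlled by Lemma~\ref{thm:qlemma1} and is a much tamer object than your $\min$ term.
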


The proof of Theorem \ref{thm:qbound} is based on the following two lemmas, the first of which is formulated in terms of the the Poisson kernel $P_z$ defined in~\eqref{eq:poissondef}.
\begin{lemma}\label{thm:qlemma1} There exists a constant $ C < \infty $, depending only on $C_V$, such that
\[\ee \sum_{y} |\mu_{xy}|\left(1_{W^c} (1_W \ast P_{i\eta})\right)  \le C  N\eta \left( 1+ \log\sqrt{ 1 + \eta^{-2} |W|^2 } \right)\]
for all intervals $ W \subset \rr$ and $ \eta > 0 $.
\end{lemma}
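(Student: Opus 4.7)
The plan is to reduce the bound, via Fubini and the first part of Lemma~\ref{thm:specavgext}, to an explicit one‐dimensional integral estimate, then compute that integral directly. The $N$ on the right-hand side is simply the cardinality of the sum over $y$; all of the work is in a deterministic Poisson-kernel calculation.

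Because $|\mu_{xy}|$ is a positive random measure and the integrand $h = 1_{W^c}(1_W \ast P_{i\eta})$ is nonnegative, Fubini--Tonelli allows me to push the expectation inside:
\[
\ee \sum_y |\mu_{xy}|(h) \;=\; \sum_y \int h \, d(\ee |\mu_{xy}|).
\]
The first part of Lemma~\ref{thm:specavgext} says that each measure $\ee |\mu_{xy}|$ is absolutely continuous with density bounded by $C$. Summing the $N$ terms therefore gives
\[
\ee \sum_y |\mu_{xy}|(h) \;\le\; C\, N \int_{W^c} (1_W \ast P_{i\eta})(\lambda) \, d\lambda,
\]
and I am reduced to bounding this deterministic integral.

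For the integral, Fubini gives $\int_{W^c}(1_W \ast P_{i\eta})\, d\lambda = \pi|W| - \int_W \int_W P_{i\eta}(\lambda-E)\,d\lambda\,dE$, using $\int_\rr P_{i\eta} = \pi$. The remaining double integral, via the standard change of variable $u = \lambda - E$ on the square $W \times W$, reduces to $2\int_0^{|W|}(|W|-u)\,P_{i\eta}(u)\,du$, which evaluates explicitly to $2|W|\arctan(|W|/\eta) - \eta \log(1 + |W|^2/\eta^2)$. Substituting back yields
\[
\int_{W^c} (1_W \ast P_{i\eta})(\lambda)\,d\lambda \;=\; 2|W|\Bigl(\tfrac{\pi}{2} - \arctan(|W|/\eta)\Bigr) + \eta \log\bigl(1 + |W|^2/\eta^2\bigr).
\]

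To finish, I use the elementary identity $\pi/2 - \arctan(x) = \arctan(1/x) \le \min(\pi/2, 1/x)$ for $x > 0$, which bounds the first summand by $\min(\pi|W|, 2\eta) \le C\eta$. The second summand is exactly $2\eta \log\sqrt{1 + \eta^{-2}|W|^2}$. Combining and multiplying by $CN$ recovers the stated bound $CN\eta\,(1 + \log\sqrt{1 + \eta^{-2}|W|^2})$. There is no genuine obstacle in the argument; the technical content of the lemma is really the spectral averaging bound from Lemma~\ref{thm:specavgext}, which handles the randomness in one stroke and lets everything else collapse to a routine Poisson-kernel integration — the lemma being precisely the quantitative statement that Poisson smoothing at scale $\eta$ picks up only a logarithmic amount of mass from outside the window $W$.
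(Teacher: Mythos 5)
Your proof is correct and follows the paper's argument: spectral averaging (the first part of Lemma~\ref{thm:specavgext}) reduces the expectation to the deterministic integral $CN\int_{W^c}(1_W * P_{i\eta})$, which is then computed in closed form and bounded using $\pi/2 - \arctan x \le 1/x$. The only cosmetic difference is that you evaluate the integral as $\pi|W|$ minus $\int_W\int_W P_{i\eta}(\lambda-E)\,dE\,d\lambda$ via a diagonal change of variables, whereas the paper rescales to $\eta^{-1}W$ and integrates directly over the complement — both yield the same expression $2|W|\bigl(\tfrac{\pi}{2}-\arctan(|W|/\eta)\bigr) + \eta\log(1+|W|^2/\eta^2)$.
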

\begin{proof} By spectral averaging (Lemma \ref{thm:specavgext}),
\begin{align*}\sum_{y}\ee  |\mu_{xy}|(1_{W^c} (1_W \ast P_{i\eta})) & \le CN\int_{W^c} (1_{W} \ast P_{i\eta})(\lambda) \, d\lambda\\
&= CN\int_{W^c}\!\int_W\! \frac{\eta}{(u-v)^2 + \eta^2} \, du\,dv\\
&=CN \eta \int_{\eta^{-1} W^c}\!\int_{\eta^{-1} W}\! \frac{1}{1 + (u-v)^2} \, du\,dv.
\end{align*}
Without loss of generality, we may assume that $\eta^{-1} W = [-a,a]$, so
\begin{align*}
 \int_{\eta^{-1} W^c}\!\int_{\eta^{-1} W}\! \frac{1}{1 + (u-v)^2} \, du\,dv &= \int_{\eta^{-1}W^c} \! \arctan(a-v)-  \arctan(v+a) \,dv\\
&= 2 \int_{a}^\infty \!  \arctan(v+a) -  \arctan(v-a  ) \,dv
\end{align*}
since $\arctan v$ is an odd function of $v$. After the appropriate translations, this last integral is 
\begin{align*}
&= 2 \lim_{R \to \infty}\int_{R-a}^{R+a} \arctan v \, dv-  2\int_0^{2a} \! \arctan v \, dv \\
&= 2 \left(\frac{2\pi a}{2} -\int_0^{2a} \! \arctan v \, dv\right)\\
&= 2a\left(\frac{\pi}{2} - \arctan(2a)\right) +  \log\sqrt{1 + 4a^2} .
\end{align*}
The proof is completed by noting $|\arctan(x) - \pi/2| \le 1/x$ and inserting $a=\eta^{-1}|W|/2$.
\end{proof}

The second lemma needed for the proof of Theorem \ref{thm:qbound} controls the generic spacing between the eigenvalues of $H$ in the interval $W$.

\begin{lemma}\label{thm:qlemma2} Let $W \subset \rr$ be an interval and $|W| \geq S > 0$. Then, there exists a constant $C < \infty$, depending only on $C_V$, such that the event
\[\mathcal{E} = \ \left\{ \min_{\lambda \in \sigma(H) \cap W} d\left(\lambda,  \partial W \cup \sigma(H) \setminus \{\lambda\}\right) > 2S \right\} \]
satisfies
\[\pp(\mathcal{E}^c) \le CSN (1 + |W|N).\]
\end{lemma}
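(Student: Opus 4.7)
The plan is to decompose the complement event $\mathcal{E}^c$ into a boundary event and a bulk pair-collision event, and then to bound each using the Wegner estimate~\eqref{eq:wegnerest} and the Combes--Germinet--Klein~\cite{MR2505733} Minami-type bound, both of which are available thanks to the Lipschitz hypothesis~\eqref{eq:potassumption}. Writing $W = [a,b]$ and letting $U = [a - 2S, a + 2S] \cup [b - 2S, b + 2S]$ be the $2S$-neighborhood of $\partial W$, I first observe that if $\lambda \in \sigma(H) \cap W$ witnesses $\mathcal{E}^c$ through some $\lambda' \in \sigma(H) \setminus \{\lambda\}$ with $|\lambda - \lambda'| \le 2S$, then either $\lambda' \in W$ as well or else $\lambda'$ lies on the opposite side of $\partial W$, which because $W$ is a single interval forces $\lambda \in U$. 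Hence
\[\mathcal{E}^c \subseteq \mathcal{A}_1 \cup \mathcal{A}_2, \quad \mathcal{A}_1 = \{\sigma(H) \cap U \neq \emptyset\}, \quad \mathcal{A}_2 = \{\exists\,\lambda \neq \lambda' \in \sigma(H) \cap W : |\lambda - \lambda'| \le 2S\}.\]

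For $\mathcal{A}_1$ I would apply~\eqref{eq:wegnerest} to the set $U$, whose Lebesgue measure is at most $4S$, to conclude
\[\pp(\mathcal{A}_1) \le \ee \tr 1_U(H) = N\, \ee \nu(U) \le 4 C_V N S.\]
For $\mathcal{A}_2$ I would cover $W$ by $K \le |W|/(2S) + 1$ consecutive intervals $I_1,\dots,I_K$ of length $2S$. Any two eigenvalues in $W$ within distance $2S$ of one another must be contained in some $J_k := I_k \cup I_{k+1}$, and $|J_k| \le 4S$. Invoking the Combes--Germinet--Klein bound $\pp(\tr 1_J(H) \ge 2) \le (C_V N|J|)^2/2$ and summing over $k$ gives $\pp(\mathcal{A}_2) \le K \cdot 8 C_V^2 N^2 S^2 \le 4 C_V^2 N^2 S |W| + 8 C_V^2 N^2 S^2$, after which the hypothesis $|W| \ge S$ lets me absorb $8 C_V^2 N^2 S^2 \le 8 C_V^2 N^2 S |W|$, yielding $\pp(\mathcal{A}_2) \le C N^2 S |W|$ with $C$ depending only on $C_V$.

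Combining the two bounds then delivers $\pp(\mathcal{E}^c) \le C S N (1 + N|W|)$, as required. The step that does any real work is the reduction from close pairs straddling $\partial W$ to an eigenvalue in $U$, which relies on $W$ being a single interval; everything else is a routine packaging of the standard Wegner and Minami-type inputs, and the role of the assumption $|W| \ge S$ is simply to prevent the pure $N^2 S^2$ term from spoiling the advertised $SN(1 + N|W|)$ form.
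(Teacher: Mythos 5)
Your proof is correct and follows essentially the same strategy as the paper: tile $W$ into length-$O(S)$ intervals, control the boundary contribution with the Wegner estimate and the pair contribution with a Minami-type bound, and sum over the tiling. One small gap in your exposition: when justifying $\mathcal{E}^c \subseteq \mathcal{A}_1 \cup \mathcal{A}_2$ you only treat the case where $\lambda$ is witnessed by a nearby eigenvalue $\lambda'$, omitting the case where $\lambda$ alone satisfies $d(\lambda, \partial W) \le 2S$; that case also lands in $\mathcal{A}_1$ since such a $\lambda$ lies in $U$, so the decomposition remains valid, but it deserves a sentence.
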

\begin{proof} We split $W$ into a disjoint union of adjacent intervals
\[W = I_1 \cup \dots \cup I_p\]
with $|I_k| = 2S$ for $1 \le k \le p-1$ and $|I_{p}| \le 2S$, and let $\tilde{I}_k$ denote the fattened interval $I_k + [-2S, 2S]$. Then $\mathcal{E}^c$ can only occur if
\begin{enumerate}
\item $\tilde{I}_k$ contains at least two eigenvalues of $H$ for some $1 \le k \le p$, or
\item $\partial W + [-2S,2S]$ contains an eigenvalue of $H$.
\end{enumerate}
Therefore, the Wegner and Minami estimates show that
\begin{align*}\pp({\mathcal{E}}^c) &\le \pp\left(\left|\left( \partial W + [-2S,2S] \right)\cap \sigma(H)\right| \geq 1\right) +  \sum_{k=1}^p \pp\left( \left|\tilde{I}_k \cap \sigma(H)\right| \geq 2\right)\\
&\le CSN+ Cp\left(S  N \right)^2,
\end{align*}
and since $p \le 2|W|/S$, this proves the lemma.
\end{proof}

\begin{proof} [Proof of Theorem \ref{thm:qbound}]
Let $S = \frac{8}{\pi}\eta$ so that the event $\mathcal{E}$ defined in Lemma \ref{thm:qlemma2} satisfies
\[\pp(\mathcal{E}^c) \le C N^{w-\ell}.\]
Since the spectral measures $\mu_{xy}$ are real, we can construct the function
\[f(E) = \sum_{\lambda \in \sigma(H) \cap W} \sgn \left[ \psi_\lambda(x)\psi_\lambda(y) \right] I_{\lambda}(E),\]
where $I_\lambda$ denotes the indicator function of the interval $[\lambda-S, \lambda+S]$. We will prove that on the event $\mathcal{E}$ we have $\|f\|_\infty \le 1$ and
\beq{eq:variationpoisson} \sum_{y \in Y} |\mu_{xy}|(W) \le \frac{2}{\pi}\sum_{y \in Y} \mu_{xy}(f \ast P_{i\eta}) +  \sum_{y} |\mu_{xy}|\left(1_{W^c} (1_W \ast P_{i\eta})\right),
\eeq
so, since
\begin{align*} \mu_{xy}(f \ast P_{i\eta}) &= \iint \! f(E) P_{\lambda + i\eta}(E) \, dE \, \mu_{xy}(d\lambda) \\
&=  \int \! f(E) \int \! P_{E + i\eta}(\lambda) \, \mu_{xy}(d\lambda) \, dE \\
&\le \|f\|_\infty \int_W \! \left| \im G(x,y; E + i\eta) \right| \, dE,
\end{align*}
the theorem follows from Lemma~\ref{thm:qlemma1} and Markov's inequality.

On $\mathcal{E}$, the intervals $I_\lambda$ are disjoint and contained in $W$, so $|f| \le 1_W$ and, in particular, $\|f\|_\infty \le 1$. To verify \eqref{eq:variationpoisson}, we note that
\begin{align*}
 \mu_{xy}(f \ast P_{i\eta}) &= \mu_{xy}\left(1_{W} (f \ast P_{i\eta})\right) + \mu_{xy}\left(1_{W^c} (f \ast P_{i\eta})\right)\\
&\geq   \sum_{\lambda \in \sigma(H) \cap W} \psi_\lambda(x) \psi_\lambda(y) (f \ast P_{i\eta})(\lambda)  -  |\mu_{xy}|\left(1_{W^c} (1_W\ast P_{i\eta})\right)
\end{align*}
on $\mathcal{E}$ and hence it remains only to prove that
\[\sgn \left[ \psi_\lambda(x)\psi_\lambda(y) \right] (f \ast P_{i\eta})(\lambda) \geq \frac{\pi}{2}\]
for all $\lambda \in \sigma(H) \cap W$. This is based on the fact that
\[\int \! (1-I_\lambda(E)) P_{\lambda + i\eta}(E) \, dE \le \frac{2\eta}{S} = \frac{\pi}{4}\]
and hence
\[\int \! I_\lambda(E) P_{\lambda + i\eta}(E) \, dE \geq \pi - \int \! (1- I_\lambda(E)) P_{\lambda + i\eta}(E) \, dE \geq \frac{3\pi}{4}. \]
If $\lambda \in \sigma(H) \cap W$ with $\sgn \left[ \psi_\lambda(x)\psi_\lambda(y) \right] = 1$, it follows that
\begin{align*} (f \ast P_{i\eta})(\lambda)  &= \int \! f(E) P_{\lambda+i\eta}(E) \, dE\\
&\geq \int \! I_\lambda(E) P_{\lambda+i\eta}(E) \, dE  -  \int \! (1-I_\lambda(E))P_{\lambda+i\eta} (E) \, dE \\
&\geq \frac{\pi}{2},
\end{align*}
and similarly
\[(f \ast P_{i\eta})(\lambda) \le - \frac{\pi}{2}\]
if $\sgn \left[ \psi_\lambda(x)\psi_\lambda(y) \right] = -1$.
\end{proof}

The proof of the last theorem made use of the fact that the spectral measures $\mu_{xy}$ are always real for the GOE flow. It is possible to extend this result to models with complex off-diagonal spectral measures, such as the GUE flow, by using the fact that
\[\langle \delta_y, \im (H-z)^{-1}\delta_x \rangle +  \langle \delta_x, \im (H-z)^{-1}\delta_y \rangle =  \im G(x,y;z) + \im G(y,x;z),\]
but we omit these complications here.

With Theorem \ref{thm:qbound} in hand, we now turn to the proof of Theorem \ref{thm:localization}. As in Section~\ref{sec:poisson}, we drop the normalizing constant $ Z_{n,c} $ from the definition of $ H_n$. The core of this argument again consists of resolvent bounds for Gaussian perturbations, and thus we consider the Green functions
\[G_n(x,y;z) = \langle \delta_y, (H_n - z)^{-1} \delta_x \rangle , \quad G_{n,m}(x,y;z) =  \langle \delta_y, (H_{n,m} - z)^{-1} \delta_x \rangle.\]
If $\eta = 2^{-(1 + \ell)n}$ for some $\ell > 0$, Theorem \ref{thm:stabthm} proves that there exists $ C < \infty $ such that
\begin{align*} & 2^{-k}\sum_{y \in B_k(x)} \ee \left| G_k\left(x,y; E+i\eta\right)-  G_{k,k-1}\left(x,y;E+i\eta\right) \right|\\
& \le C \, 2^{-\frac{c}{2}k} \left(1 + 2^{3( (1+\ell) n - k)}\right)\\
&= C\,2^{3(1 + \ell)n-3(1+\delta)k}
\end{align*}
with $\delta = c/6$ whenever $k \le n$. Iterating this result, we see that
\begin{align*} & 2^{-n} \sum_{y \in B_n} \ee \left| G_n\left(x,y; E+i\eta\right)-  G_{n,m}\left(x,y;E+i\eta \right) \right|\nonumber\\
&\le 2^{-n} \sum_{k=m+1}^n\sum_{y \in B_n} \ee \left| G_{n,k}\left(x,y; E+i\eta\right)-  G_{n,k-1}\left(x,y;E+i\eta \right) \right|\nonumber\\
&= 2^{-n} \sum_{k=m+1}^n \sum_{y \in B_k(x)} \ee \left| G_{k}\left(x,y; E+i\eta\right)-  G_{k,k-1}\left(x,y;E+i\eta \right) \right|\nonumber\\
&\le 2^{-n} \sum_{k=m+1}^n 2^k C\,2^{3(1 + \ell)n-3(1+\delta)k} \le C\,2^{(3(1+\ell) - 1) n} \, 2^{-(3(1+\delta)-1)m}.
\end{align*}
Since $\delta > 0$, we can choose $\ell > 0$, $\epsilon \in (0,1)$, and $w \in (0, \ell)$ such that
\[2\mu \vcentcolon= (1 - \epsilon)(3(1 + \delta)-1) - (3(1+\ell) -1)  - w > 0.\]
Thus, setting $m_n = (1 - \epsilon)n$ and
\[W = \left[E - 2^{-(1-w)n}, E + 2^{-(1-w)n}\right],\]
and using that $G_{n,m}(x,y;z) = 0$ if $y \notin B_m(x)$ show that
\[\sum_{y \in B_n \setminus B_{m_n}(x)} \ee \int_W \!  \left| \im G_n\left(x,y; E+i\eta\right) \right| \, dE \le C\,2^{-2\mu n}.\]
Applying Markov's inequality, we arrive at
\[\pp\left(\sum_{y \in B_n \setminus B_{m_n}(x)} \int_W \!  \left| \im G_n\left(x,y; E+i\eta\right) \right| \, dE > 2^{-\mu n}\right) \le C\, 2^{-\mu n},\]
so Theorem \ref{thm:localization} follows from Theorem \ref{thm:qbound}, which says that
\[ \sum_{y \in B_n \setminus B_{m_n}(x)} Q_n(x,y;W) \le \sum_{y \in B_n \setminus B_{m_n}(x)} \int_W \! \left| \im G_n(x,y;E+i\eta) \right| \, dE + \frac{\log 2^n}{2^{wn}}\]
with probability $1 - \oh\left(2^{(w-\ell)n}\right)$.

\appendix
\section{Proofs for the Rosenzweig-Porter model}\label{sec:RosPorter}
\begin{proof}[Proof of Theorem~\ref{thm:rpthm}]  As $N \to \infty$, the random measure defined by
\[\mu_{N,0}(f) = \sum_{\lambda \in \sigma(H_0)} f(N(\lambda - E))\]
converges in distribution to a Poisson point process with intensity $\varrho(E)$. Setting $z_N = E + z/N$, a simple calculation yields
\[\mu_N(P_z) = \im S_t(z_N).\]
Thus,
\[\left|\ee e^{-\mu_N(P_z)} - \ee e^{-\mu_{N,0}(P_z)}\right| \le \ee |S_t(z_N) - S_0(z_N)| \le C N^{-c/2},\]
which shows that the characteristic functionals of $\mu_N$ and $\mu_{N,0}$ asymptotically agree on the set $\{P_z: z \in \cp\}$ whose linear span is dense in $C_0$. This proves the first point.

For the second assertion, choose $\ell > w >0$ and $\mu_0 > 0$ such that
\[3\ell + w + 2\mu_0 \le c/2.\]
Since $G_0(x,y; z) = 0$ for $x \neq y$, Theorem~\ref{thm:stabthm} shows that with $\eta = N^{-(1+\ell)}$ we have
\begin{align*} \ee \sum_{y \neq x} \int_W \! \left| \im G_t(x,y; E + i\eta) \right|  \, dE  &\le C |W| N  N^{-c/2} (\eta N)^{-3}\\
&\le C N^{w + 3\ell - c/2} \le N^{-2\mu_0}.
\end{align*}
By Markov's inequality,
\[ \pp \left( \sum_{y \neq x} \int_W \! \left| \im G_t(x,y; E + i\eta) \right|  \, dE  \geq  N^{-\mu_0} \right) \le CN^{-\mu_0} \]
so choosing $0 < \mu < \min\{w, \mu_0\}$ and $\kappa = \min\{w-\ell, \mu_0\}$, Theorem~\ref{thm:qbound} shows that
\[ \pp\left(\sum_{y \neq x} Q_N(x,y; W) > N^{-\mu} \right) \le CN^{-\kappa}. \]
\end{proof}

\section*{Acknowledgements} We thank Y. V. Fyodorov for introducing us to the ultrametric ensemble. This work was supported by the DFG (WA 1699/2-1).

\bibliographystyle{abbrv}
\bibliography{References}

\begin{thebibliography}{10}

\bibitem{MR3364516}
M.~Aizenman and S.~Warzel.
\newblock {\em Random operators: Disorder effects on quantum spectra and
  dynamics}, volume 168 of {\em Graduate Studies in Mathematics}.
\newblock American Mathematical Society, Providence, RI, 2015.

\bibitem{MR2760897}
G.~W. Anderson, A.~Guionnet, and O.~Zeitouni.
\newblock {\em An introduction to random matrices}, volume 118 of {\em
  Cambridge Studies in Advanced Mathematics}.
\newblock Cambridge University Press, Cambridge, 2010.

\bibitem{benignibourgade}
L.~Benigni.
\newblock Eigenvectors distribution and quantum unique ergodicity for deformed
  {W}igner matrices.
\newblock Preprint available at ar{X}iv:1711.07103, 2017.

\bibitem{MR3695802}
P.~Bourgade, L.~Erd\H{o}s, H.-T. Yau, and J.~Yin.
\newblock Universality for a class of random band matrices.
\newblock {\em Adv. Theor. Math. Phys.}, 21(3):739--800, 2017.

\bibitem{MR3606475}
P.~Bourgade and H.-T. Yau.
\newblock The eigenvector moment flow and local quantum unique ergodicity.
\newblock {\em Comm. Math. Phys.}, 350(1):231--278, 2017.

\bibitem{MR1063180}
A.~Bovier.
\newblock The density of states in the {A}nderson model at weak disorder: a
  renormalization group analysis of the hierarchical model.
\newblock {\em J. Statist. Phys.}, 59(3-4):745--779, 1990.

\bibitem{PhysRevLett.64.1851}
G.~Casati, L.~Molinari, and F.~Izrailev.
\newblock Scaling properties of band random matrices.
\newblock {\em Phys. Rev. Lett.}, 64:1851--1854, Apr 1990.

\bibitem{MR2505733}
J.-M. Combes, F.~Germinet, and A.~Klein.
\newblock Generalized eigenvalue-counting estimates for the {A}nderson model.
\newblock {\em J. Stat. Phys.}, 135(2):201--216, 2009.

\bibitem{MR1942858}
M.~Disertori, H.~Pinson, and T.~Spencer.
\newblock Density of states for random band matrices.
\newblock {\em Comm. Math. Phys.}, 232(1):83--124, 2002.

\bibitem{duitsjohansson}
M.~Duits and K.~Johansson.
\newblock On mesoscopic equilibrium for linear statistics in {D}yson's
  {B}rownian motion.
\newblock {\em To appear in Memoirs of the American Mathematical Society},
  2017.

\bibitem{MR0148397}
F.~J. Dyson.
\newblock A {B}rownian-motion model for the eigenvalues of a random matrix.
\newblock {\em J. Mathematical Phys.}, 3:1191--1198, 1962.

\bibitem{MR0436850}
F.~J. Dyson.
\newblock Existence of a phase-transition in a one-dimensional {I}sing
  ferromagnet.
\newblock {\em Comm. Math. Phys.}, 12(2):91--107, 1969.

\bibitem{0036-0279-66-3-R02}
L.~Erd\H{o}s.
\newblock Universality of {W}igner random matrices: a survey of recent results.
\newblock {\em Russian Mathematical Surveys}, 66(3):507, 2011.

\bibitem{MR3085669}
L.~Erd\H{o}s, A.~Knowles, H.-T. Yau, and J.~Yin.
\newblock Delocalization and diffusion profile for random band matrices.
\newblock {\em Comm. Math. Phys.}, 323(1):367--416, 2013.

\bibitem{MR3068390}
L.~Erd\H{o}s, A.~Knowles, H.-T. Yau, and J.~Yin.
\newblock The local semicircle law for a general class of random matrices.
\newblock {\em Electron. J. Probab.}, 18:no. 59, 58, 2013.

\bibitem{MR2810797}
L.~Erd\H{o}s, B.~Schlein, and H.-T. Yau.
\newblock Universality of random matrices and local relaxation flow.
\newblock {\em Invent. Math.}, 185(1):75--119, 2011.

\bibitem{RevModPhys.80.1355}
F.~Evers and A.~D. Mirlin.
\newblock Anderson transitions.
\newblock {\em Rev. Mod. Phys.}, 80:1355--1417, Oct 2008.

\bibitem{0295-5075-115-4-47003}
D.~Facoetti, P.~Vivo, and G.~Biroli.
\newblock From non-ergodic eigenvectors to local resolvent statistics and back:
  A random matrix perspective.
\newblock {\em EPL (Europhysics Letters)}, 115(4):47003, 2016.

\bibitem{PhysRevLett.67.2405}
Y.~V. Fyodorov and A.~D. Mirlin.
\newblock Scaling properties of localization in random band matrices: A
  \ensuremath{\sigma}-model approach.
\newblock {\em Phys. Rev. Lett.}, 67:2405--2409, Oct 1991.

\bibitem{1742-5468-2009-12-L12001}
Y.~V. Fyodorov, A.~Ossipov, and A.~Rodriguez.
\newblock The {A}nderson localization transition and eigenfunction
  multifractality in an ensemble of ultrametric random matrices.
\newblock {\em Journal of Statistical Mechanics: Theory and Experiment},
  2009(12):L12001, 2009.

\bibitem{landonhuangarxiv}
J.~Huang and B.~Landon.
\newblock Local law and mesoscopic fluctuations of {D}yson {B}rownian motion
  for general $\beta$ and potential.
\newblock {\em arXiv:1612.06306}, 2017.

\bibitem{MR841099}
S.~Kotani.
\newblock Lyapunov exponents and spectra for one-dimensional random
  {S}chr\"odinger operators.
\newblock In {\em Random matrices and their applications ({B}runswick, {M}aine,
  1984)}, volume~50 of {\em Contemp. Math.}, pages 277--286. Amer. Math. Soc.,
  Providence, RI, 1986.

\bibitem{1367-2630-17-12-122002}
V.~E. Kravtsov, I.~M. Khaymovich, E.~Cuevas, and M.~Amini.
\newblock A random matrix model with localization and ergodic transitions.
\newblock {\em New Journal of Physics}, 17(12):122002, 2015.

\bibitem{MR2352276}
E.~Kritchevski.
\newblock Hierarchical {A}nderson model.
\newblock In {\em Probability and mathematical physics}, volume~42 of {\em CRM
  Proc. Lecture Notes}, pages 309--322. Amer. Math. Soc., Providence, RI, 2007.

\bibitem{MR2276652}
E.~Kritchevski.
\newblock Spectral localization in the hierarchical {A}nderson model.
\newblock {\em Proc. Amer. Math. Soc.}, 135(5):1431--1440 (electronic), 2007.

\bibitem{MR2413200}
E.~Kritchevski.
\newblock Poisson statistics of eigenvalues in the hierarchical {A}nderson
  model.
\newblock {\em Ann. Henri Poincar\'e}, 9(4):685--709, 2008.

\bibitem{MR2909106}
S.~Kuttruf and P.~M{\"u}ller.
\newblock Lifshits tails in the hierarchical {A}nderson model.
\newblock {\em Ann. Henri Poincar\'e}, 13(3):525--541, 2012.

\bibitem{landonsosoeyauarxiv}
B.~Landon, P.~Sosoe, and H.-T. Yau.
\newblock Fixed energy universality for {D}yson {B}rownian motion.
\newblock Preprint available at arXiv:1609.09011, 2016.

\bibitem{landonyauarxiv}
B.~Landon and H.-T. Yau.
\newblock Convergence of {L}ocal {S}tatistics of {D}yson {B}rownian {M}otion.
\newblock {\em Comm. Math. Phys.}, 355(3):949--1000, 2017.

\bibitem{MR3134604}
J.~O. Lee and K.~Schnelli.
\newblock Local deformed semicircle law and complete delocalization for
  {W}igner matrices with random potential.
\newblock {\em J. Math. Phys.}, 54(10):103504, 62, 2013.

\bibitem{MR1385082}
N.~Minami.
\newblock Local fluctuation of the spectrum of a multidimensional {A}nderson
  tight binding model.
\newblock {\em Comm. Math. Phys.}, 177(3):709--725, 1996.

\bibitem{PhysRevE.54.3221}
A.~D. Mirlin, Y.~V. Fyodorov, F.-M. Dittes, J.~Quezada, and T.~H. Seligman.
\newblock Transition from localized to extended eigenstates in the ensemble of
  power-law random banded matrices.
\newblock {\em Phys. Rev. E}, 54:3221--3230, Oct 1996.

\bibitem{MR1463464}
S.~Molchanov.
\newblock Hierarchical random matrices and operators. {A}pplication to
  {A}nderson model.
\newblock In {\em Multidimensional statistical analysis and theory of random
  matrices ({B}owling {G}reen, {OH}, 1996)}, pages 179--194. VSP, Utrecht,
  1996.

\bibitem{PhysRev.120.1698}
N.~Rosenzweig and C.~E. Porter.
\newblock "{R}epulsion of energy levels" in complex atomic spectra.
\newblock {\em Phys. Rev.}, 120:1698--1714, Dec 1960.

\bibitem{MR2525652}
J.~Schenker.
\newblock Eigenvector localization for random band matrices with power law band
  width.
\newblock {\em Comm. Math. Phys.}, 290(3):1065--1097, 2009.

\bibitem{MR2726110}
S.~Sodin.
\newblock The spectral edge of some random band matrices.
\newblock {\em Ann. of Math. (2)}, 172(3):2223--2251, 2010.

\bibitem{nonergodic}
P.~von Soosten and S.~Warzel.
\newblock Non-ergodic delocalization in the {R}osenzweig-{P}orter model.
\newblock Preprint available at ar{X}iv:1709.10313, 2017.

\bibitem{MR3649447}
P.~von Soosten and S.~Warzel.
\newblock Renormalization {G}roup {A}nalysis of the {H}ierarchical {A}nderson
  {M}odel.
\newblock {\em Ann. Henri Poincar\'e}, 18(6):1919--1947, 2017.

\bibitem{MR639135}
F.~Wegner.
\newblock Bounds on the density of states in disordered systems.
\newblock {\em Z. Phys. B}, 44(1-2):9--15, 1981.

\end{thebibliography}
\bigskip
\begin{minipage}{0.5\linewidth}
\noindent Per von Soosten\\
Zentrum Mathematik, TU M\"{u}nchen\\
Boltzmannstra{\ss}e 3, 85747 Garching\\
Germany\\
\verb+vonsoost@ma.tum.de+
\end{minipage}%
\begin{minipage}{0.5\linewidth}
\noindent Simone Warzel\\
Zentrum Mathematik, TU M\"{u}nchen\\
Boltzmannstra{\ss}e 3, 85747 Garching\\
Germany\\
\verb+warzel@ma.tum.de+
\end{minipage}
\end{document}